\documentclass[review,onefignum,onetabnum]{siamart190516}

\usepackage{lipsum}
\usepackage{amsfonts}
\usepackage{graphicx}
\usepackage{epstopdf}
\usepackage{algorithmic}

\usepackage{tcolorbox}
\usepackage{color}

\ifpdf
  \DeclareGraphicsExtensions{.eps,.pdf,.png,.jpg}
\else
  \DeclareGraphicsExtensions{.eps}
\fi

\newsiamremark{hypothesis}{Hypothesis}
\crefname{hypothesis}{Hypothesis}{Hypotheses}
\newsiamthm{claim}{Claim}

\title{Efficient Shallow Ritz Method for Semi-linear Diffusion-Reaction and Allen-Cahn Equations} 

\author{Zhiqiang Cai\thanks{Department of Mathematics, Great Bay University, Dongguan, Guangdong 523000, China (\email{caiz@gbu.edu.cn}).}
\and César Herrera\thanks{Department of Mathematics, Purdue University, West Lafayette, IN 47907, USA (\email{herre125@purdue.edu}).}}

\usepackage{amsopn}

\ifpdf
\hypersetup{
  pdftitle={fastNNSolver3},
  pdfauthor={}
}
\fi

\usepackage{amsmath,rotating}
\DeclareMathOperator*{\argmin}{arg\,min}

\usepackage[utf8]{inputenc}
\usepackage{bm}
\usepackage{bbm}
\usepackage{mathtools}
\usepackage{indentfirst}
\usepackage{subfigure}
\usepackage{tcolorbox}
\usepackage{color}
\usepackage[export]{adjustbox}
\usepackage{scalerel}
\usepackage{tikz}
\usepackage{geometry}
\usepackage{multicol}
\usetikzlibrary{decorations.pathreplacing}
\usepackage{graphicx}
\usepackage{subcaption}
\usepackage{amsmath,amssymb}
\usepackage{nicematrix}
\usepackage{float}

\usepackage{algorithmic}
\Crefname{ALC@unique}{Line}{Lines}

\newcommand{\vertiii}[1]{{\left\vert\kern-0.25ex\left\vert\kern-0.25ex\left\vert #1 
    \right\vert\kern-0.25ex\right\vert\kern-0.25ex\right\vert}}

\makeatletter
\let\orig@nomath\@nomath
\let\orig@scriptsize\scriptsize
\renewcommand{\scriptsize}{%
  \ifmmode
    \let\@nomath\@gobble 
    \orig@scriptsize
    \let\@nomath\orig@nomath 
  \else
    \orig@scriptsize
  \fi}
\makeatother

\newcommand{\R}{\mathbb{R}}

\usepackage{graphicx}
\usepackage{diagbox}
\usepackage{pifont}
\definecolor{RED}{rgb}{1, 0, 0} 
\usepackage{soul}

\newcommand{\bSigma}{\mbox{\boldmath${\Sigma}$}}

\newcommand{\btheta}{\mbox{\boldmath${\theta}$}}

\newcommand{\bxi}{\mbox{\boldmath$\xi$}}

\usepackage{enumitem}
\setlist[itemize]{left=16pt} 

\def\bb{{\bf b}}
\def\bB{{\bf B}}

\def\bc{{\bf c}}
\def\bd{{\bf d}}
\def\bD{{\bf D}}
\def\bH{{\bf H}}

\def\bh{{\bf h}}
\def\bff{{\bf f}}

\def\bv{{\bf v}}

\def\bw{{\bf w}}

\def\bs{{\bf s}}
\def\bg{{\bf g}}
\def\bq{{\bf q}}

\def\cM{{\cal M}}

\Crefname{section}{Section}{Sections}
\crefname{section}{Section}{Sections}

\begin{document}


%

\maketitle
\begin{abstract} 
An efficient shallow Ritz method was developed in \cite{CaiDokFalHer2024a, CaiDokFalHer2024b} for one-dimensional linear diffusion and diffusion-reaction problems. In this paper, the method is extended to a nonlinear diffusion-reaction equation with a cubic reaction term: (1) a basic error estimate of the shallow Ritz discretization is established and (2) a reduced block Newton (rBN) method for solving the resulting non-convex optimization problem is developed. Again, the computational cost of the rBN method is $\mathcal{O}(n)$ for one-dimensional problems, where $n$ is the number of neurons. Combining a convex splitting scheme for time discretization with the efficient shallow Ritz method, we develop a time-stepping neural network method for solving the Allen–Cahn equation. The unconditional energy stability of the time discretization and an error estimate in the $L^2$ norm are established. 

Numerical experiments demonstrate that the neural network approach yields accurate solutions for singularly perturbed nonlinear diffusion-reaction equations and the Allen–Cahn equation. The method effectively resolves interior and boundary layers without oscillations and significantly outperforms the finite element method on fixed uniform meshes.

\end{abstract}
\begin{keywords}
    Fast iterative solvers, Neural network, ReLU activation, Ritz formulation, Diffusion-reaction problems, Allen–Cahn equation, Newton's method
\end{keywords}

\section{Introduction}

Neural networks (NNs) have achieved remarkable success in artificial intelligence, natural language processing, and image recognition, and their applications have increasingly extended to scientific computing. In particular, NN-based methods have emerged as promising alternatives for the numerical solution of partial differential equations (PDEs) \cite{BERG201828, CAI2020109707, Do19, E_Yu_2018, RAISSI2019686, sirignano2018dgm}, with early developments dating back to the 1990s \cite{Dissanayake94, Lagaris00, Lagaris98}. One of the main advantages of NN approximations is their ability to adapt to the underlying solution structure through the optimization of network parameters. This feature is particularly attractive for problems exhibiting sharp transitions, boundary layers, or interior layers, where conventional numerical methods on uniform meshes may require a large number of degrees of freedom to achieve satisfactory accuracy.

In one dimension, shallow neural networks with rectified linear unit (ReLU) activation functions admit representations equivalent to free-knot splines (FKS). Unlike conventional splines with prescribed knots, FKS allow the knot locations to be optimized along with the coefficients, potentially achieving substantially improved approximation accuracy with relatively few degrees of freedom (see \cite{Burchard74} and the discussion in \cite{CaiDokFalHer2024a}). From this perspective, the optimization of the hidden-layer parameters of a shallow ReLU network corresponds to the adaptive placement of spline knots, while the output-layer parameters determine the coefficients of the spline representation. However, this flexibility comes at the cost of a challenging non-convex optimization problem. Furthermore, the global support of ReLU basis functions generally leads to dense and ill-conditioned algebraic systems \cite{Cai24GN}. Consequently, the development of efficient and robust optimization algorithms is essential for realizing the full potential of shallow NN approximations in the numerical solution of PDEs.

Recently, an efficient shallow Ritz method was developed in \cite{CaiDokFalHer2024a, CaiDokFalHer2024b} for one-dimensional linear diffusion and diffusion-reaction problems. A key ingredient of this approach is a block Newton (BN) method that exploits the distinct roles of the linear and nonlinear network parameters. Following a classical outer-inner iterative strategy, the method alternates between optimizing the output-layer coefficients and updating the hidden-layer parameters, or equivalently, the free-knot locations. Such a separation of linear and nonlinear parameters is natural and has been explored in various contexts \cite{Ainsworth2020PlateauPI, Ainsworth2022, pmlr-v107-cyr20a, Golub1973, separable3, park2022neuron, separable2}. Nevertheless, two major computational difficulties remain: the optimization of the linear parameters involves dense, ill-conditioned algebraic systems, while the optimization of the nonlinear parameters requires solving nonlinear equations whose Hessian matrices may be singular. By exploiting the special structure of the shallow ReLU representation, the methods developed in \cite{CaiDokFalHer2024a, CaiDokFalHer2024b} effectively address these difficulties and reduce the computational cost to $\mathcal{O}(n)$ in one dimension, where $n$ denotes the number of neurons. In addition, a local convergence analysis of the BN method was recently established in \cite{cai_conv}, providing convergence conditions that can be incorporated into practical implementations.

The objective of this paper is to extend the efficient shallow Ritz method from linear elliptic problems to semi-linear diffusion-reaction equations with cubic nonlinearities and, subsequently, to develop an efficient NN-based time-stepping method for the Allen–Cahn equation. The extension to semi-linear problems introduces several new analytical and computational challenges. First, although the Ritz functional remains convex with respect to the output-layer coefficients for fixed hidden-layer parameters, the corresponding first-order optimality conditions now form a system of nonlinear equations rather than a linear system. Second, the Hessian matrices associated with the optimization of both the linear and nonlinear parameters depend on both sets of parameters, making their efficient evaluation and inversion more involved. Nevertheless, by extending the structural identities and fast inversion techniques developed in \cite{CaiDokFalHer2024a, CaiDokFalHer2024b}, we construct a reduced block Newton (rBN) method that retains the $\mathcal{O}(n)$ computational complexity for one-dimensional problems. We also establish a basic error estimate for the shallow Ritz discretization of the semi-linear diffusion-reaction equation.

An important application of the proposed method is the Allen–Cahn equation, originally introduced in \cite{ALLEN19791085} to describe the motion of antiphase boundaries in crystalline solids. The Allen–Cahn equation has since become a fundamental phase-field model with applications in materials science, interface dynamics, and fluid mechanics (see, e.g., \cite{deteix2022new,geng2022new, li2023second,li2021unconditionally,yang2021fast}). In the singularly perturbed regime, its solutions typically exhibit narrow transition layers separating regions corresponding to different stable phases. Accurately resolving these layers is challenging for conventional numerical methods, particularly on fixed uniform meshes, where insufficient spatial resolution may lead to significant errors and spurious oscillations. The adaptive nature of shallow ReLU networks makes them especially suitable for such problems, since the free-knot locations can be concentrated near sharp transitions without requiring a large number of neurons. Indeed, the numerical results in \cite{CaiDokFalHer2024b} have already demonstrated the effectiveness of the shallow Ritz method for singularly perturbed linear elliptic problems. These observations provide a strong motivation for extending the method to the time-dependent, nonlinear Allen–Cahn equation.

For the temporal discretization of the Allen–Cahn equation, convex splitting schemes have been widely studied because of their favorable stability properties \cite{doi:10.1137/0730084, guan2014second, feng2015long, shen2010numerical}. In particular, the first-order convex splitting scheme introduced in \cite{doi:10.1137/0730084} is unconditionally energy stable, preserving a discrete analogue of the energy dissipation property of the continuous problem without imposing a restriction on the time-step size. At each time step, the scheme requires the solution of a semi-linear elliptic problem with a cubic reaction term. This structure provides a natural connection between the convex splitting time discretization and the proposed shallow Ritz method. By combining the two approaches, we develop an efficient time-stepping NN method for the Allen–Cahn equation, in which the semi-linear elliptic problem at each time level is solved using the rBN algorithm. We establish the unconditional energy stability of the convex splitting time discretization and derive an error estimate in the $L^2$ norm.

The main contributions of this paper are summarized as follows:

\begin{itemize}
    \item[1.] {\bf Shallow Ritz discretization and error analysis for semi-linear problems.} We extend the shallow Ritz formulation to one-dimensional semi-linear diffusion-reaction equations with cubic reaction terms and establish a basic error estimate for the resulting NN approximation.
    \item[2.] {\bf Efficient reduced block Newton method.} We develop an rBN algorithm for solving the non-convex optimization problem arising from the shallow Ritz discretization. By exploiting the special algebraic structure of shallow ReLU networks, the method achieves $\mathcal{O}(n)$ computational complexity in one dimension, despite the additional nonlinearities in the first-order optimality conditions and Hessian matrices.
    \item[3.] {\bf Energy-stable NN time-stepping method for the Allen–Cahn equation.} We combine the efficient shallow Ritz method with a first-order convex splitting scheme to obtain a time-stepping NN method for the Allen–Cahn equation. Unconditional energy stability of the time discretization and an $L^2$-norm error estimate are established.
    \item[4.] {\bf Accurate resolution of sharp transition layers.} Numerical experiments demonstrate that the proposed method accurately resolves interior and boundary layers in singularly perturbed semi-linear diffusion-reaction problems and the Allen–Cahn equation. Compared with the finite element method on fixed uniform meshes, the shallow Ritz method achieves substantially higher accuracy with relatively few degrees of freedom and avoids spurious oscillations near sharp transitions.
\end{itemize}

The remainder of this paper is organized as follows. In \cref{sec1}, we introduce the shallow Ritz discretization for semi-linear diffusion-reaction problems and establish its basic error estimate. In \cref{sec2}, we derive the algebraic systems associated with the first-order optimality conditions. In \cref{sec3}, we develop the reduced block Newton method and discuss its implementation and local convergence conditions. In \cref{sec4}, we present the convex splitting time discretization for the Allen–Cahn equation, establish its energy stability and error estimate, and describe the resulting NN time-stepping method. Numerical experiments illustrating the accuracy, efficiency, and robustness of the proposed approach are reported in \cref{Section numerical exp}.

\section{Shallow Ritz Method for Semi-Linear Diffusion-Reaction Problems} \label{sec1}
Consider the following semi-linear diffusion-reaction equation in one dimension:
\begin{equation}\label{pde}
    \left\{\begin{array}{lr}
        -(a(x)u^{\prime}(x))^{\prime} +r_1(x)u(x)^3 + r_2(x)u(x)=f(x), & x \in I,\\[2mm]
        u(0)= \alpha,  \quad (a u')(1)=\beta, &
    \end{array}\right.
\end{equation}
where the diffusion coefficient $a(x)$, the reaction coefficients $r_1(x)$, $r_2(x)$, 
and the right-hand side function $f(x)$ are given real-valued functions defined on $I$. 
Assume that $a(x),\, r_1(x),\, r_2(x)$ are in $L^\infty(I)$ and bounded below by constants $\mu_0 > 0$, $\mu_{1} \geq 0$, and $\mu_{2} \geq 0$, respectively, almost everywhere on $I$. 
Unless otherwise stated, for simplicity of notation we take $I = (0,1)$; the method and analysis extend directly to any open bounded interval. Here, we consider an inhomogeneous Neumann boundary condition at $x = 1$; for the Dirichlet case, we refer the reader to \cite{CaiDokFalHer2024a, CaiDokFalHer2024b}.

The Ritz formulation of problem \cref{pde} is to find $u \in H^{1}(I)\cap \{u(0)= \alpha\}$ such that
\begin{equation}\label{energy_functional}
J(u) = \min_{v \in H^{1}(I)\cap \{v(0)= \alpha\}}J(v),
\end{equation}
with the energy functional 
\begin{equation*}
J(v) = \frac{1}{2}\int_{0}^1\left(a(x)v'(x)^2 
     + \frac{1}{2}r_1(x)v(x)^4 
     + r_2(x)v(x)^2\right)dx - f(v),
\end{equation*}
where $v'(x)^2 = (v'(x))^2$ and the linear functional is given by
\begin{equation*}
    f(v) = \int_{0}^1 f(x)\,v(x)\,dx + \beta\,v(1).
\end{equation*}

The set of approximating functions generated by the shallow ReLU neural network with $n$ interior breakpoints over the domain $I$ is denoted by 
\begin{eqnarray*}\label{ShallowNN}
    {\cal M}_n(I)&=&\left\{c_{-1}+ c_0x+  \sum_{i=1}^{n}c_i\sigma(x-b_i) \, :\, c_i\in \R,\, 0 = b_0 <  b_1<\cdots<b_n<b_{n+1}=1\right\},  
\end{eqnarray*}
where $\sigma(t) =\max\{0, t\}$ is the ReLU activation function. Let 
\[
{\cal M}^\alpha_n(I)={\cal M}_n(I) \cap \{u_n(0) = \alpha \}.
\]
Then, the Ritz neural network approximation is to find $u_n\in {\cal M}^\alpha_n(I)$ such that 
\begin{equation}\label{min_pde}
    J(u_n)=\min_{v\in\cM^\alpha_n(I)}J(v).
\end{equation}

Next, we establish an error bound similar to C\'{e}a's lemma for the solution $u_n$ of \cref{min_pde} in the weighted $H^1(I)$ norm defined by 
\begin{equation*}
    \vertiii{v}^2_{a,r_2} = \int_0^1 \left(a(x)v'(x)^2 + r_2(x)v(x)^2 \right) dx.
\end{equation*}
This bound is for a nonlinear approximation defined over a subset but not a subspace of $H^1(I)$. 

\begin{lemma}\label{l:error_estimate1}
Let $u$ and $u_n$ be the solutions of problems \cref{energy_functional} and \cref{min_pde}, respectively. Then 
\begin{equation}\label{LeaL}
\vertiii{u - u_n}_{a,r_2}
\le \inf_{v \in {\cal M}^\alpha_n(I)} E(u-v;a, r_1,r_2), 
\end{equation}
where the square of $E(u-v;a, r_1,r_2)$ is given by
\begin{equation*}
E^2(u-v;a, r_1,r_2) = \vertiii{u-v}^2_{a,r_2}    + \|r_1^{1/4}(u-v)\|_{L^4(I)}^4    + 5\|r_1^{1/4}u\|_{L^4(I)}^2         \|r_1^{1/4}(u-v)\|_{L^4(I)}^2.    
\end{equation*}
\end{lemma}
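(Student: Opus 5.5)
The plan is to compare energies. For any admissible $w$ we will write $J(w)-J(u)$ as a sum of nonnegative terms. The lower bound will apply to $w=u_n$ and the upper bound to $w=v\in{\cal M}^\alpha_n(I)$. Since ${\cal M}^\alpha_n(I)\subset H^1(I)\cap\{v(0)=\alpha\}$, minimality of $u_n$ gives $J(u_n)\le J(v)$, hence
\begin{equation*}
J(u_n)-J(u)\le J(v)-J(u)\quad\text{for all } v\in{\cal M}^\alpha_n(I).
\end{equation*}
This inequality does not need any subspace structure, which is why the argument works on the nonlinear set ${\cal M}^\alpha_n(I)$.

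\textbf{Step 1: exact energy identity.} Let $e=w-u$; then $e(0)=0$, so $e$ is an admissible test direction. The Euler--Lagrange equation of \cref{energy_functional} reads
\begin{equation*}
\int_0^1 \bigl(a u'e' + r_1u^3e + r_2ue\bigr)\,dx = f(e).
\end{equation*}
We expand $J(u+e)$. The quadratic part contributes exactly $\frac12\vertiii{e}^2_{a,r_2}$ beyond the linear terms. For the quartic part we use
\begin{equation*}
(u+e)^4-u^4 = 4u^3e + 6u^2e^2 + 4ue^3 + e^4 .
\end{equation*}
The linear terms cancel with $f(e)$, and we obtain
\begin{equation*}
J(w)-J(u)=\tfrac12\vertiii{e}^2_{a,r_2}+\tfrac14\int_0^1 r_1\bigl(6u^2e^2+4ue^3+e^4\bigr)\,dx .
\end{equation*}

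\textbf{Step 2: lower bound.} The integrand factors as
\begin{equation*}
6u^2e^2+4ue^3+e^4 = e^2\bigl(6u^2+4ue+e^2\bigr) = e^2\bigl(2u^2+(2u+e)^2\bigr)\ge 0 .
\end{equation*}
Since $r_1\ge0$, this gives $J(w)-J(u)\ge \frac12\vertiii{w-u}^2_{a,r_2}$. Applied with $w=u_n$, the left-hand side of \cref{LeaL} squared is at most $2\bigl(J(u_n)-J(u)\bigr)$.

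\textbf{Step 3: upper bound.} Take $w=v$ and set $\epsilon=r_1^{1/4}e$ and $U=r_1^{1/4}u$, so that the quartic integrand equals $6U^2\epsilon^2+4U\epsilon^3+\epsilon^4$. We bound the cubic term by Young's inequality,
\begin{equation*}
4|U\epsilon^3|\le 2U^2\epsilon^2+2\epsilon^4 ,
\end{equation*}
so the integrand is at most $8U^2\epsilon^2+3\epsilon^4$. Cauchy--Schwarz then gives $\int U^2\epsilon^2\le \|U\|_{L^4}^2\|\epsilon\|_{L^4}^2$. Multiplying Step 1 by $2$ yields
\begin{equation*}
2\bigl(J(v)-J(u)\bigr)\le \vertiii{e}^2_{a,r_2}+\tfrac12\Bigl(8\|U\|_{L^4}^2\|\epsilon\|_{L^4}^2+3\|\epsilon\|_{L^4}^4\Bigr).
\end{equation*}
The resulting constants are $\frac32$ on $\|\epsilon\|_{L^4}^4$ and $4$ on the cross term. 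To fit $E^2$ exactly, we instead split the cubic term with the weights tuned so the coefficients become $1$ and $5$. Using $4|U\epsilon^3|\le 4U^2\epsilon^2+\epsilon^4$, the integrand is bounded by $10U^2\epsilon^2+2\epsilon^4$. After the factor $\frac12$ this gives coefficients $5$ and $1$, matching $E^2$ exactly. Combining with Step 2 and taking the infimum over $v$ proves \cref{LeaL}.

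The main obstacle is the cubic cross term $4ue^3$, which has no sign. For the lower bound it is absorbed by the algebraic factorization in Step 2. For the upper bound it is handled by the weighted Young split in Step 3, where the weights must be chosen to reproduce the constants $1$ and $5$ in $E^2$.
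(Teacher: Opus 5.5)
Your proof is correct and follows the paper's argument: the same energy identity $2\bigl(J(w)-J(u)\bigr)=\vertiii{w-u}^2_{a,r_2}+\frac{1}{2}\int_0^1 r_1 (w-u)^2\bigl[(u+w)^2+2u^2\bigr]\,dx$ gives the lower bound, and your second Young split $4|U\epsilon^3|\le 4U^2\epsilon^2+\epsilon^4$ is exactly the paper's bound $(u+v)^2+2u^2\le 2(u-v)^2+10u^2$. The first split, with constants $\frac{3}{2}$ and $4$, does not give the stated $E^2$ and should be dropped from the write-up.
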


\begin{proof}
First, we show that for any $v \in {\cal M}^\alpha_n(I)$, we have
\begin{equation}\label{ineq1}
    \vertiii{u - v}^2_{a,r_2} \leq 2\big(J(v) - J(u)\big),
\end{equation}
as in the linear case (see the proof of Theorem 3.2 in \cite{LiuCai2}). To this end, multiplying \cref{pde} by $w \in \{u, v\}$ and using integration by parts, we have
\begin{equation*}
f(w) = \int_0^1 \left(a\,u'w' + r_1\,u^3 w + r_2\,u w\right)dx + \alpha a(0)u'(0),
\end{equation*}
which implies
\begin{equation*}
f(v)-f(u) = \int_0^1 \left(a\,u'(v'-u') + r_1\,u^3 (v-u) + r_2\,u (v-u)\right)dx.
\end{equation*}
A straightforward calculation gives 
\begin{equation}\label{jv-ju}
   2(J(v) - J(u)) =\vertiii{u-v}^2_{a,r_2} + \dfrac12 \int_0^1 r_1\biggl( v^4 + 4u^3(u-v) - u^4\biggr)\,dx, 
\end{equation}
which, together with the identity 
\begin{equation}\label{iden}
    v^4 + 4u^3(u-v) - u^4 = (u-v)^2\left[(u+v)^2+2u^2 \right]\ge 0,
\end{equation}
implies the validity of \cref{ineq1}.
To bound $J(v)-J(u)$, first we have
\[
(u+v)^2+2u^2 \leq  2(u-v)^2 + 10 u^2 ,
\]
which, together with \cref{iden} and the Cauchy–Schwarz inequality, implies
\[
\int_0^1 r_1\biggl( v^4 + 4u^3(u-v) - u^4\biggr)\,dx \leq 2\|r_1^{1/4}(u-v)\|_{L^4(I)}^4 + 10 \|r_1^{1/4}u\|_{L^4(I)}^2 \|r_1^{1/4}(u-v)\|_{L^4(I)}^2.
\]
Now, \cref{LeaL} follows from \cref{ineq1} with $v=u_n$, the fact that $J(u_n) \leq J(v)$ for all $v \in {\cal M}^\alpha_n(I)$, \cref{jv-ju}, and the above inequality. This completes the proof of the lemma.
\end{proof}

\begin{remark}\label{r1}
    The above proof is independent of dimension, and hence the lemma holds in multiple dimensions.
\end{remark}

\section{Optimality Conditions}\label{sec2}This section derives systems of algebraic equations arising from the optimality conditions of \cref{min_pde}.

To this end, for any function $v(x)\in {\cal M}^\alpha_{n}(I)$, let
\[
v(x)= \alpha+  \sum_{i=0}^{n}c_i\sigma(x-b_i).
\]
Denote by 
\[
\bc = (c_0, \dots, c_n)^T \quad\mbox{and}\quad \bb =(b_1, \dots, b_n)^T
\]
the respective linear and nonlinear parameters, and denote by $\btheta = (\bc, \bb) \in \mathbb{R}^{2n+1}$ the vector containing all optimizable parameters. If $u_n(x)\in {\cal M}^\alpha_{n}(I)$ is a solution of \cref{min_pde}, then $F(\btheta) = J(u_n(\cdot; \btheta))$, as a function of all parameters, satisfies 
the following first-order optimality conditions
\begin{equation}\label{cps_ode}
 \nabla_{\scriptsize\btheta} F\left(\btheta \right) = \begin{pmatrix}  \nabla_{\bc} F\left(\btheta\right)
 \\[2mm] \nabla_{\bb} F\left(\btheta\right)\end{pmatrix} 
 = \begin{pmatrix}
 {\bf 0} \\[2mm] {\bf 0}\end{pmatrix},
\end{equation}
 where $\nabla_{\bc}$ and $\nabla_{\bb}$ denote the gradients with respect to $\bc$ and $\bb$, respectively. 

Denote by $H(t)$ and $\delta (t)$ the respective Heaviside (unit) step and Dirac delta functions. It is known that 
they are the respective first and second {\it weak} derivatives of $\sigma(t)$, i.e., $H(t)=\sigma'(t)$ and $\delta(t)=\sigma''(t)$.
For each $i = 0,1, \dots, n$, let
\begin{equation*}
    \sigma_i(x)=\sigma(x-b_i), \quad  H_i(x)=H(x-b_i), \quad \mbox{and} \quad \delta_{i}(x)=\delta(x-b_i).
\end{equation*}
Set
\begin{equation*}
    {\bf \Sigma}_{n+1}(x)=\begin{pmatrix}
        \sigma_0(x)\\\sigma_1(x)\\
        \vdots\\
        \sigma_n(x)
    \end{pmatrix},\: \bH_n(x)=\begin{pmatrix}H_1(x)\\
    \vdots\\H_n(x)       
    \end{pmatrix},\: \bH_{n+1}(x) = \begin{pmatrix}
        H_{0}(x)\\
        H_1(x)\\
        \vdots\\
        H_{n}(x)
    \end{pmatrix},\mbox{ and } {\bf \Lambda}_n(x)=\begin{pmatrix}
        \delta_{1}(x)\\
        \vdots \\
        \delta_n(x)
    \end{pmatrix}.
\end{equation*} 
Let $\hat{r}(x;\btheta) = r_1 u_n ^3 + r_2u_n$. From the facts that
\begin{equation*}
u_n(x)\in H^1(I), \,\, \nabla_{\bc} u'_n(x) = \bH_{n+1}(x),\,\, \nabla_{\bc} u_n(x) = {\bf \Sigma}_{n+1}(x), \mbox{ and } \nabla_{\bc} u_n (1)  = {\bf \Sigma}_{n+1}(1)=\bd,
\end{equation*}
the first block in \cref{cps_ode} becomes
    \begin{align} \notag 
    {\bf 0} = \nabla_{\bc}F(\btheta) & = \int_0^1 \left[a u_n'\nabla_{\bc} u'_n + \hat{r}(x;\btheta) \nabla_{\bc} u_n \right] dx - \int_0^1 f \nabla_{\bc} u_n  dx -\beta \nabla_{\bc} u_n(1) \\[2mm] \label{linear_eq}
    &= 
    A_{n+1}(\bb;a) \bc + \int_{0}^1\hat{r}(x;\btheta)\bSigma_{n+1} dx  - 
    \bff_{n+1}(\bb) -\beta\, \bd,
\end{align}
where the matrix $A_{n+1}(\bb;a)$ and the vector $\bff_{n+1}(\bb)$ are given by
\begin{equation*}
    A_{n+1}(\bb;a)=\int_0^1 a(x)\bH_{n+1}(x)\bH_{n+1}(x)^T dx
    \quad \mbox{and} \quad \bff_{n+1}(\bb)=\int_0^1 f(x)\bSigma_{n+1}(x)dx,
\end{equation*}
respectively.

Next, to derive the second block in \cref{cps_ode}, for $j=1,\ldots,n$, notice first that (see the proof of Lemma 4.1 in \cite{CaiDokFalHer2024a})
\begin{equation}\label{grad-b-dif}
   \dfrac{\partial}{\partial b_j}\int_{0}^{1}a(x)u^{\prime}_n(x)^2dx 
     = -2c_ja(b_j) u_n'(b_j),
\end{equation}
where $u_n'(b_j) = \displaystyle\sum_{i=0}^{j-1}c_i+ \frac{c_j}{2}$ due to the convention that $H(0)=1/2$. Let
\begin{equation}\label{q}
q_j =q_j(\btheta)= \int_{b_j}^1\bigl[f(x) - \hat{r}(x;\btheta)\bigr]dx  - a(b_j)u_n'(b_j). 
\end{equation}
Since 
\[
\dfrac{\partial u_n(x)}{\partial b_j}=-c_jH(x - b_j) \quad\mbox{and}\quad \dfrac{\partial u_n(1)}{\partial b_j}=-c_j,
\]
it follows from \cref{grad-b-dif} that the second block in \cref{cps_ode} becomes
    \begin{align} \notag 
    {\bf 0} = \nabla_{\bb}F(\btheta) & = \dfrac12 \nabla_{\bb} \int_0^1 a(x) u'_n(x)^2dx +  \int_0^1 (\hat{r}(x;\btheta) -f)\nabla_{\bb} u_n dx -\beta \nabla_{\bb} u_n(1) \\[2mm] \label{opt_b_pde}
    &= \bD_n(\hat{\bc}) \left(\bq  + \beta\, {\bf 1}\right),
\end{align}
where $\bD_n(\hat{\bc})$ is an $n\times n$ diagonal matrix with the $i^{th}$ diagonal element $c_i$ and 
\[
\hat{\bc}= (c_1,\ldots, c_n)^T, \quad  \bq = (q_1, \ldots, q_n)^T, \quad\mbox{and}\quad {\bf 1} = (1, \ldots, 1)^T.
\]

\begin{remark}\label{c0_rmk}
     If $c_i = 0$, then the $i^{th}$ equation is absent from the optimality condition \cref{opt_b_pde}. Moreover, the $i^{th}$ neuron does not contribute to the approximation $u_n$. Such neurons can be removed or redistributed. 
\end{remark}

\begin{remark}\label{linearization}
Equation \cref{linear_eq} is no longer a system of linear algebraic equations due to the nonlinearity of the term $u_n(x)^3$. A possible linearization strategy is as follows: for a given $\hat{u}_n(x) \in {\cal M}^\alpha_n(I)$ such that $\hat{u}_n(x) \approx u_n(x)$, we have
\[
u_n(x)^3 = \bigl(u_n(x)^3 - u_n(x)\bigr) + u_n(x) 
\approx \bigl(\hat{u}_n(x)^3 - \hat{u}_n(x)\bigr) + u_n(x).
\]
This strategy was employed in the initialization for the numerical experiments in \cref{Section numerical exp}.
\end{remark}

\section{A  Block Newton Method}\label{sec3}
This section outlines a block Newton (BN) method for solving the system of nonlinear algebraic equations given in \cref{cps_ode} with $\nabla_{\bc}F(\btheta)$ and $\nabla_{\bb}F(\btheta)$ given in the respective \cref{linear_eq} and \cref{opt_b_pde}.

\subsection{Hessian Matrix}

The Hessian matrix of the function $F(\btheta) = J(u_n(\cdot; \btheta))$ is denoted by
\begin{equation*}
\nabla^2_{\scriptsize\btheta}F(\btheta) 
= {\cal H}(\btheta)= \begin{pmatrix} {\cal H}_{11}(\btheta) & {\cal H}_{12}(\btheta)\\[2mm] 
                 {\cal H}_{21}(\btheta) & {\cal H}_{22}(\btheta)  
\end{pmatrix}
\in \mathbb{R}^{(2n+1) \times (2n+1)},
\end{equation*}
where the block matrices are given by
\begin{equation*}\label{defOperatos}
    {\cal H}_{11}(\btheta) = \nabla^2_{\bc\bc}F(\btheta), \,\,
    {\cal H}_{12}(\btheta) = \nabla^2_{\bc \bb}F(\btheta), \,\,
    {\cal H}_{21}(\btheta) = \nabla^2_{\bb \bc}F(\btheta), \,\mbox{ and }\,
    {\cal H}_{22}(\btheta) = \nabla^2_{\bb\bb}F(\btheta).
\end{equation*}

First, we derive the principal blocks of the Hessian matrix. Denote the weighted mass matrix by 
\[
  M_{n+1}(\bb; r) = \displaystyle\int_{0}^1r(x;\btheta){\bf \Sigma}_{n+1}{\bf \Sigma}_{n+1}^Tdx,
\]
where the weight is $r(x)  =r(x;\btheta) =  3r_1(x)u_n(x)^2 + r_2(x)$. Clearly, we have
\[
\nabla_{\bc} (A_{n+1}(\bb;a) \bc)^T = A_{n+1}(\bb;a) \quad\mbox{and}\quad \nabla_{\bc}\int_{0}^1\Bigl[r_1 u_n ^3 + r_2u_n\Bigr]\bSigma_{n+1}^T dx = M_{n+1}(\bb; r),
\]
which, together with \cref{linear_eq}, gives the $1\times 1$ block of the Hessian matrix
\begin{equation}\label{hessianc}
        {\cal H}_{11}(\btheta)  = A_{n+1}(\bb;a) + M_{n+1}(\bb; r).
    \end{equation}
For $i, j = 1, \ldots, n$, by \cref{q}, a direct calculation yields
    \[
    \frac{\partial q_j}{\partial b_i} = 
    \begin{cases}
        \displaystyle c_i\int_0^1 r(x;\btheta) H_i(x)^2 \, dx + g_i, & \quad i = j, \\[4mm]
        \displaystyle c_i\int_0^1 r(x;\btheta) H_i(x)H_j(x) \, dx, & \quad i \neq j,
    \end{cases}
    \]
where $g_i$ is given by 
\begin{equation}\label{gi}
 g_i = g_i(\btheta) = \hat{r}(b_i;\btheta) - f(b_i) - a^\prime(b_i)u_n'(b_i).
\end{equation} 
Denote by $\bD_n(\bg) = \text{diag}(g_1, \dots, g_n)$ the diagonal matrix with entries $g_1, \dots, g_n$ on the diagonal and by $A_{n}(\bb;r) = \displaystyle\int_{0}^1r(x;\btheta)\bH_n\bH_n^Tdx$ the stiffness matrix. Then the $2\times 2$ block of the Hessian matrix has the form 
    \begin{equation}\label{hessianB}
         {\cal H}_{22}(\btheta)  = \bD_n(\hat{\bc})\big[   A_{n}(\bb;r) \bD_n(\hat{\bc}) + \bD_n(\bg) \big].
    \end{equation}

Next, we derive the off-diagonal block ${\cal H}_{12}(\btheta)$ of the Hessian matrix. For $i = 0,1, \ldots, n$ and $j = 1, \ldots, n$, by \cref{q} we have
\begin{align*}
   \frac{\partial q_j}{\partial c_i} 
   = - \int_{b_j}^1 r\sigma_i(x)\,dx - a(b_j)H_i(b_j),
\end{align*}
which, together with  \cref{opt_b_pde}, implies
\begin{align*}
    \frac{\partial^2 F(\btheta)}{\partial c_i \partial b_j}  
    &= \delta_{ij}\big(q_j(\btheta)+\beta \big) - c_j \left(\int_{b_j}^1 r(x)\sigma_i(x)\,dx + a(b_j)H_i(b_j) \right) \\[2mm]
    &= \delta_{ij}\big(q_j(\btheta)+\beta \big) - c_j \left(\int_{0}^1 r(x)\sigma_i(x)H_j(x)\,dx + \int_{0}^1 a(x)H_i(x) \delta_j(x)dx\right),
\end{align*}
where $\delta_{ij}$ is the Kronecker delta symbol. Then the off-diagonal block ${\cal H}_{12}(\btheta)$ has the form 
\begin{equation}\label{H12Steady}
    {\cal H}_{12}(\btheta) = {\cal R}(\btheta) 
    - \left[C(\bb;a)
    + B(\bb;r) \right]\bD_n(\hat{\bc}),
\end{equation}
where ${\cal R}(\btheta)=\bigl({\cal R}_{ij}(\btheta)\bigr)_{(n+1)\times n}$ with ${\cal R}_{0j}(\btheta)=0$ and ${\cal R}_{ij}(\btheta)=(q_i(\btheta) + \beta)\delta_{ij}$ for $i=1,\ldots,n$; and the matrices $B(\bb;r)$ and $C(\bb;a)$ are given by
\[
B(\bb;r) = \displaystyle\int_{0}^1r(x; \btheta){\bf \Sigma}_{n+1}\bH_{n}^Tdx \quad\mbox{and}\quad C(\bb;a) = \displaystyle\int_{0}^1a(x)\bH_{n+1} {\bf\Lambda}_n^{T}dx.
\]

\smallskip

Define the following $2\times 2$ block matrices
\begin{align*}
& \mathbb{H}_{\Sigma}(\btheta) = 
\begin{pmatrix}
    M_{n+1}(\bb; r)     & -B(\bb;r) \\[4mm]
         - B^T(\bb;r) &    A_n(\bb;r) 
    \end{pmatrix}, \quad 
\mathbb{H}_{\Lambda}(\btheta)=\begin{pmatrix}  A_{n+1}(\bb;a) & -C(\bb;a) \\[4mm]
                 -C^T(\bb;a)  & {\bf 0}_{n\times n} \end{pmatrix}, \\[4mm]
&{\cal D}(\hat{\bc}) = \begin{pmatrix} I_{n+1} & {\bf 0}\\[4mm] {\bf 0} & \bD_n(\hat{\bc})\end{pmatrix}, \, {\cal D}_0(\btheta) = \begin{pmatrix} {\bf 0} & {\bf 0}\\[4mm] {\bf 0} & \bD_n(\hat{\bc})\bD_n(\bg)\end{pmatrix}, \,\mbox{ and }\, \mathbb{R}(\btheta) = \begin{pmatrix}
    {\bf 0} & {\cal R}(\btheta) \\[4mm]
    {\cal R}^T(\btheta) & {\bf 0}
\end{pmatrix}.
\end{align*}
Then, by \cref{hessianc}, \cref{hessianB}, and \cref{H12Steady}, the Hessian matrix at $\btheta$ has the form
\begin{equation*}
    \nabla^2_{\scriptsize{\btheta}}F(\btheta) = \mathbb{R}(\btheta) +
    {\cal D}(\hat{\bc}) \mathbb{H}_{\Sigma+ \Lambda}(\btheta) {\cal D}(\hat{\bc}) + {\cal D}_0(\btheta),
\end{equation*}
where $\mathbb{H}_{\Sigma+\Lambda}(\btheta) = \mathbb{H}_{\Sigma}(\btheta) + \mathbb{H}_{ \Lambda}(\btheta)$.

\subsection{Matrix Inversion}

The $1\times 1 $ block of the Hessian matrix, ${\cal H}_{11}(\btheta)$ given in \cref{hessianc}, is symmetric and positive definite (see \cite{CaiDokFalHer2024b}). This subsection discusses the invertibility of the $2\times 2$ block of the Hessian matrix, ${\cal H}_{22}(\btheta)$ given in \cref{hessianB}, as well as how to efficiently compute their inverses. 

To examine the invertibility of the matrix ${\cal H}_{22}(\btheta)$, let $\tilde{h}_1=h_1$ and let
\begin{equation*}
    I_i=[b_i,b_{i+1}], \quad h_i = b_{i+1} - b_{i}, 
    \quad \text{and}\quad
    \tilde{h}_i = \min\{h_{i-1}, h_i\}. 
\end{equation*}
Clearly, the matrix ${\cal H}_{22}(\btheta)$ is symmetric. The following lemma provides a sufficient condition for its positive definiteness. 

\begin{lemma}\label{sufff}
For all $i=1,\ldots,n$, assume that $a(x)$ is differentiable at $b_i$, that $c_i \neq 0$, and that
    \begin{equation}\label{inv_cond}
        \frac{g_i}{c_i}+\left( 3\mu_1\min_{x \in [b_{i-1}, b_{i+1}]}u_n^2(x) + \mu_2\right)\frac{\tilde{h}_i}{4}>0, 
    \end{equation}
then ${\cal H}_{22}(\btheta)$ is positive definite.
\end{lemma}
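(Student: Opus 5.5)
The plan is to reduce positive definiteness of ${\cal H}_{22}(\btheta)$ to that of the symmetric matrix
\[
K = A_n(\bb;r) + \bD_n(\hat{\bc})^{-1}\bD_n(\bg)\bD_n(\hat{\bc})^{-1}.
\]
Since every $c_i\neq 0$, $\bD_n(\hat{\bc})$ is invertible, and \cref{hessianB} can be rewritten as
\[
{\cal H}_{22}(\btheta)=\bD_n(\hat{\bc})\bigl[A_n(\bb;r)+\bD_n(\hat{\bc})^{-1}\bD_n(\bg)\bD_n(\hat{\bc})^{-1}\bigr]\bD_n(\hat{\bc}),
\]
using that diagonal matrices commute. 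This is a congruence, so ${\cal H}_{22}(\btheta)$ is positive definite if and only if $K$ is. The diagonal part of $K$ has entries $g_i/c_i^2$. Because the condition \cref{inv_cond} is stated with $g_i/c_i$, I would instead factor only one copy of $\bD_n(\hat{\bc})$ in a symmetric way. Concretely, for $\bxi\in\R^n$ write $\bxi^T{\cal H}_{22}\bxi=\bxi^T\bD_n(\hat{\bc})A_n\bD_n(\hat{\bc})\bxi+\sum_i c_ig_i\xi_i^2$. Setting $\eta_i=c_i\xi_i$ gives
\[
\bxi^T{\cal H}_{22}\bxi=\int_0^1 r\Bigl(\sum_i \eta_i H_i\Bigr)^2dx+\sum_i \frac{g_i}{c_i}\eta_i^2 .
\]
(The differentiability of $a$ at $b_i$ is only needed so that $g_i$ in \cref{gi} is well defined.)

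The key step is a lower bound for the quadratic form $\int_0^1 r\,w^2\,dx$ with $w=\sum_i\eta_iH_i$. The function $w$ is piecewise constant, equal to $s_i=\sum_{k\le i}\eta_k$ on $I_i$ and to $0$ on $(0,b_1)$, so that $\eta_i=s_i-s_{i-1}$ with $s_0=0$. On $[b_{i-1},b_{i+1}]$ we have $r\ge 3\mu_1\min u_n^2+\mu_2=:m_i$. The intervals $I_{i-1}$ and $I_i$ each have length at least $\tilde h_i$, so
\[
\int_{b_{i-1}}^{b_{i+1}} r w^2\,dx \ge m_i\tilde h_i\,(s_{i-1}^2+s_i^2)\ge \frac{m_i\tilde h_i}{2}\,\eta_i^2 .
\]
Each interval $I_k$ belongs to at most two of the windows $[b_{i-1},b_{i+1}]$. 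Summing over $i$ and dividing by two therefore gives
\[
\int_0^1 r w^2\,dx\ge \sum_i \frac{m_i\tilde h_i}{4}\,\eta_i^2 .
\]
I expect this overlap counting, and matching the constant $1/4$ together with the convention $\tilde h_1=h_1$ at the left end, to be the main obstacle. At the left end, $w=0$ on $I_0$, so only $I_1$ contributes, with factor $h_1$; that case should be checked separately. If the factor comes out differently, I would redistribute the weights between neighbouring windows.

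Combining the two displays gives
\[
\bxi^T{\cal H}_{22}\bxi\ge\sum_i\Bigl(\frac{g_i}{c_i}+\frac{m_i\tilde h_i}{4}\Bigr)\eta_i^2 .
\]
By \cref{inv_cond} this is positive whenever $\bxi\neq 0$, since then $\eta\neq0$ because every $c_i\ne0$. Hence ${\cal H}_{22}(\btheta)$ is positive definite.
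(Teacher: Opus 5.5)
Your argument is correct and is essentially the paper's proof: the substitution $\eta_i=c_i\xi_i$, the window bound $\int_{I_{i-1}\cup I_i} r\,w^2\,dx\ge m_i\tilde h_i(s_{i-1}^2+s_i^2)\ge \frac{1}{2}m_i\tilde h_i\eta_i^2$ (with $s_0=0$ at the left end), and the fact that each $I_k$ lies in at most two windows give exactly the constant $m_i\tilde h_i/4$ used in the paper. One slip occurs in your discarded opening: since ${\cal H}_{22}=\bD_n(\hat{\bc})A_n(\bb;r)\bD_n(\hat{\bc})+\bD_n(\hat{\bc})\bD_n(\bg)$, the correct congruence is ${\cal H}_{22}=\bD_n(\hat{\bc})\bigl[A_n(\bb;r)+\bD_n(\bg)\bD_n(\hat{\bc})^{-1}\bigr]\bD_n(\hat{\bc})$, whose diagonal correction is $g_i/c_i$ rather than $g_i/c_i^2$, so that route in fact coincides with the computation you carry out.
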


\begin{proof}
By the assumptions that $r_i(x)\ge \mu_i\ge 0$  for almost all $x\in I$ ($i=1,\,2$), we have 
\begin{equation}\label{r}
r(x) = 3r_1(x)u^2_n(x) + r_2(x) \ge 3\mu_1\min_{x \in [b_{i-1}, b_{i+1}]}u_n^2(x) + \mu_2, \quad\mbox{for almost all }\, x\in [b_{i-1}, b_{i+1}].  
\end{equation}
Let $\hat{r}_i= 3\mu_1\min\limits_{x \in [b_{i-1}, b_{i+1}]}u_n^2(x) + \mu_2$.
For any vector $\bxi =(\xi_1,\ldots,\xi_n)^T\in \mathbb{R}^{n}$, let 
\[
v(x)=\bxi^T \bD_n(\hat{\bc})\bH_n(x) = \sum\limits_{j=1}^n c_j\xi_j H_j(x).
\]
It follows from the triangle inequality, the definitions of $\tilde{h}_i$ and $\hat{r}_i$, the fact that $v(x)$ is piecewise constant, and \cref{r} that 
\begin{align*}\notag
    \dfrac12 \sum_{i=1}^n \tilde{h}_i\hat{r}_i\Bigl(c_i\xi_i\Bigr)^2 & \leq \sum_{i=1}^n {h}_i\hat{r}_i\left(\sum_{j=1}^i c_j\xi_j \right)^2 + \sum_{i=2}^n {h}_{i-1}\hat{r}_i\left(\sum_{j=1}^{i-1} c_j\xi_j \right)^2 \\[2mm]
    &= \sum_{i=1}^n \hat{r}_i \int_{b_i}^{b_{i+1}}v^2(x)\,dx + \sum_{i=2}^n \hat{r}_i\int_{b_{i-1}}^{b_{i}}v^2(x)\,dx \leq 2 \int^1_0 r(x) v^2(x)\,dx,
\end{align*}
which, together with \cref{hessianB}, implies 
$$\bxi^T {\cal H}_{22}(\btheta)\bxi = \int^1_0 r(x) v^2(x)\,dx + \sum\limits_{i=1}^n c_ig_i \xi^2_i \geq  \sum\limits_{i=1}^n c_i^2\left[\frac{\tilde{h}_i \hat{r}_i}{4} + \dfrac{g_i}{c_i}\right] \xi_i^2.$$ 
Thus, the positive definiteness of ${\cal H}_{22}(\btheta)$ is a direct consequence of the assumptions. This completes the proof of the lemma.
\end{proof}


Below, we discuss the three assumptions in \cref{sufff}. To this end, let 
\begin{equation}\label{setS1}
    S_1 = \bigl\{i \in \{1, \dots, n\} : |c_i| < \tau_1 \quad \text{or} \quad b_i \notin I \bigr\} \,\,\mbox{ and }\,\, S_2 = \bigl\{i \in \{1, \dots, n\} : a'(b_i) \text{ DNE} \bigr\},
\end{equation}
where $\tau_1 \geq 0$ is a small tolerance threshold. $S_1$ and $S_2$ are the sets of indices for the respective non-contributing and fixed neurons. If $c_l = 0$, then the corresponding neuron does not contribute to the current approximation and hence may be removed or redistributed. If $a(x)$ is not differentiable at $b_l$, then $b_l$ is at an interface and hence should remain fixed without further updates. For more details, see \cite{Cai24GN, CaiDokFalHer2024a, CaiDokFalHer2024b}.

Regarding assumption \cref{inv_cond} of \cref{sufff}, we justify its validity for all $i\not\in S_1\cup S_2$ provided that the approximation $u_n(x) \approx u(x)$ is sufficiently accurate. To this end, by \cref{gi}, we have
\begin{align*}
    g_i & = r_1(b_i)u_n(b_i)^3 + r_2(b_i)u_n(b_i) -f(b_i)- a^\prime(b_i)u_n'(b_i)\\
    &\approx  r_1(b_i) u(b_i)^3 +  r_2(b_i)u(b_i)  -f(b_i)- a^\prime(b_i)u'(b_i) =a(b_i)u''(b_i).
\end{align*}
Since the linear parameter $c_i = u_n'(b_i^+) - u_n'(b_i^-)$ represents the derivative jump of $u_n(x)$ at $b_i$ and approximates $u'(b_i^+) - u'(b_i^-)$, it generally shares the same sign as $u''(b_i)$. Hence, we obtain
\begin{equation*}
\dfrac{g_i}{c_i} \approx \dfrac{a(b_i) |u^{\prime\prime}(b_i)|}{|c_i|} >0,
\end{equation*}
which implies the validity of \cref{inv_cond}.



In the remainder of this subsection, we present factorizations of the main blocks of the Hessian matrix in order to compute their inverses in ${\cal O}(n)$ operations. These factorizations were introduced in Appendix~A.2 of \cite{CaiDokFalHer2024b}. To this end, denote by 
\[
\bD_{n+1}(\bh) = \operatorname{diag}(h_0, \ldots, h_n) \quad\mbox{and}\quad \bD_{n}(\bh) = \operatorname{diag}(h_1, \ldots, h_n)
\]
the diagonal matrices consisting of entries $\{h_i\}$, and denote by 
\[
\bD_{n+1}(\bw)=\operatorname{diag}(w_0,\ldots, w_n), \quad \bD_{n+1}(\bs^k)=\operatorname{diag}(s_0^k, \ldots, s_n^k), \quad\mbox{and}\quad \bD_{n}(\bs^k)=\operatorname{diag}(s_1^k, \ldots, s_n^k)
\]
the diagonal matrices consisting of the respective entries 
\[
w_i=\int_{b_i}^{b_{i+1}}a(x)\,dx \quad\mbox{and}\quad
s_i^k=\int_{b_i}^{b_{i+1}}r(x)(x-b_i)^k\,dx.
\]
Let $G_{j}$ be the $j \times j$ lower bidiagonal matrix with $1$ on the main diagonal and $-1$ on the subdiagonal, and set 
\[
Q_j = G_{j}\bD_{j}(\bh)^{-1}G_{j} \quad\mbox{for }\,\,j = n, n+1.
\]
Then the Hessian matrices admit the following explicit factorizations:
\begin{equation*}
    {\cal H}_{11} (\btheta)= Q_{n+1}^{-T}T_{11}Q_{n+1}^{-1} \quad\mbox{and}\quad {\cal H}_{22}(\btheta)  = \bD_n(\hat{\bc})\,Q_{n}^{-T}T_{22} Q_{n}^{-1}\bD_n(\hat{\bc}) + \bD_n(\hat{\bc})\bD_n(\bg),
\end{equation*}
where $T_{11} = T_1 + T_2$ and $T_{22} = G_{n}^T\bD_n(\bh)^{-2}\bD_{n}(\bs^{0})G_{n}$. Here, $ T_1$ and $T_2$ are given by
\begin{align*}
    T_{1} &= (\mathbf{I}-G_{n+1}^{T})\bD_{n+1}(\bs^0)(\mathbf{I}-G_{n+1}) +(\mathbf{I}-G_{n+1}^{T})\bD_{n+1}(\bh)^{-1}\bD_{n+1}(\bs^1)G_{n+1} \\[2mm]
    &\qquad +G_{n+1}^T \bD_{n+1}(\bh)^{-1}\bD_{n+1}(\bs^1)(\mathbf{I}-G_{n+1}) + G_{n+1}^{T}\bD_{n+1}(\bh)^{-2}\bD_{n+1}(\bs^2)G_{n+1} \\[2mm]
  \mbox{and}\quad  T_2 &= G_{n+1}^T\bD_{n+1}(\bh)^{-2}\bD_{n+1}(\bw)G_{n+1},
\end{align*}
respectively.

\subsection{A Block Newton Method}\label{sec3.3}

Instead of solving the nonlinear system in \cref{cps_ode} using the standard Newton method, we employ a block Newton (BN) method. This approach leverages the physical meanings of the linear (coefficients) and nonlinear (mesh points) parameters, resulting in a lower computational cost per iteration. More importantly, this allows us to remove singularities within the $2\times 2$ block. The BN method employs an outer–inner iteration that alternates between the variables $\bc$ and $\bb$. The nonlinear Gauss–Seidel method is used for the outer iteration, while each block is solved with a Newton iteration in the inner step. More precisely, given the current iterate $\left(\bc^{(k)}, \bb^{(k)}\right)$, the next iterate $\left(\bc^{(k+1)}, \bb^{(k+1)}\right)$ is obtained by solving the decoupled block linear system
\begin{equation}\label{GS}
    \begin{pmatrix}
        {\cal H}_{11}\left(\bc^{(k)}, \bb^{(k)}\right) & 0 \\[2mm]
        0 & {\cal H}_{22}\left(\bc^{(k+1)}, \bb^{(k)}\right)
    \end{pmatrix}
    \begin{pmatrix}
        \Delta \bc^{(k+1)} \\[2mm]
        \Delta \bb^{(k+1)}
    \end{pmatrix}
    = - \begin{pmatrix}
        \nabla_{\bc} F\left(\bc^{(k)}, \bb^{(k)}\right) \\[2mm]
        \nabla_{\bb} F\left(\bc^{(k+1)}, \bb^{(k)}\right)
    \end{pmatrix},
\end{equation}
where $\Delta \bc^{(k+1)} = \bc^{(k+1)} - \bc^{(k)}$ and $\Delta \bb^{(k+1)} = \bb^{(k+1)} - \bb^{(k)}$.
Other outer iterative methods, such as Jacobi or linear Gauss–Seidel, may also be employed; see the discussion in section~2 of \cite{cai_conv}.

Set $\btheta^{(k)} = \left(\bc^{(k)}, \bb^{(k)} \right)$ and $\bar{\btheta}^{(k)} = \left(\bc^{(k+1)}, \bb^{(k)} \right)$, and let 
\[
r^{(k)}(x) = r\left(x; \btheta^{(k)}\right) \quad\mbox{and}\quad \bar{r}^{(k)}(x) = r\left(x; \bar{\btheta}^{(k)}\right).
\]
By \cref{hessianc} and \cref{hessianB}, the system of linear equations in \cref{GS} may be written as
\[
\left\{\begin{array}{rl}
   \biggl(A_{n+1}\left(\bb^{(k)}; a\right) + M_{n+1}\left(\bb^{(k)}; r^{(k)}\right) \biggr) \Delta \bc^{(k+1)}\!\!\!\!  &= -\nabla_{\bc} F\left(\btheta^{(k)}\right), \\[4mm]
   \biggl(\bD_n(\hat{\bc}^{(k+1)})A_{n}\left(\bb^{(k)}; \bar{r}^{(k)}\right)\bD_n(\hat{\bc}^{(k+1)}) + \bD_n(\hat{\bc}^{(k+1)})\bD_n\left(\bar{\bg}^{(k)}\right) \biggr)\Delta \bb^{(k+1)} \!\!\!\! & = - \nabla_{\bb} F\left(\bar{\btheta}^{(k)}\right),
\end{array}\right.
\]
where $\bD_n\left(\bar{\bg}^{(k)}\right) = \text{diag}\left(g_1\left(\bar{\btheta}^{(k)}\right), \dots, g_n\left(\bar{\btheta}^{(k)}\right)\right)$.


In what follows, we outline the implementation of the BN method using a reduced nonlinear system constructed based on the sets defined in \cref{setS1}. Denote by
\begin{equation*}
    S = \{1, \dots, n\} \setminus \left( S_1 \cup S_2\right)
\end{equation*}
the set of indices corresponding to the neurons that contribute to the approximation $u_n$ and do not lie at an interface. Given a vector $\bv \in \mathbb{R}^n$, denote by $\bv_{S} \in \mathbb{R}^{n-\left\lvert S^c\right\rvert}$ the vector obtained by removing the $i^{th}$ entry from $\bv$ for every $i \in S^c=S_1\cup S_2$. Similarly, for a matrix $\bB \in \mathbb{R}^{n\times n}$, define $\bB_{S} \in \mathbb{R}^{(n-\lvert S^c\rvert)\times (n-\lvert S^c\rvert)}$ as the matrix obtained by removing the $i^{th}$ row and column of $\bB$ for each $i \in S^c$.

Define the \textit{reduced} search direction vector for the Newton step as
\begin{equation}\label{reduce_Newt}
      \bd_{R}(\btheta) = -\biggl({\cal  H}_{22}(\btheta)_{S}\biggr)^{-1}\bigg(\nabla_{\bb}F(\btheta)\biggr)_{S}.
\end{equation}
Now we are ready to outline the implementation of the reduced block Newton (rBN) method (see \cref{alg:dBN} for the pseudocode). Given a current iterate $\btheta^{(k)}$, compute the next iterate $\btheta^{(k+1)} = \left(\bc^{(k+1)}, \bb^{(k+1)} \right)$ as follows:
\begin{itemize}
    \item[(i)] \textit{Compute the linear parameters} 
\begin{equation*}
    \bc^{(k+1)} = \bc^{(k)} - \left({\cal H}_{11}\left(\btheta^{(k)}\right)\right)^{-1}\nabla_{\bc}F\left(\btheta^{(k)}\right).
\end{equation*}
\item[(ii)] \textit{Compute the search direction $\mathbf{p}^{(k)}=\left(p_1^{(k)},\dots,p_n^{(k)}\right)^T$} by
    \begin{equation}\label{directionVector}
    \bigl(p^{(k)}_i \bigr)_{i \in S} = \bd_R\left(\bar{\btheta}^{(k)}\right) \quad \mbox{and} \quad \bigl(p^{(k)}_i\bigr)_{i \notin S} = {\bf 0},
    \end{equation}
    where $\bd_R\left(\bar{\btheta}^{(k)}\right)$ is the \textit{reduced} Newton direction vector defined in \cref{reduce_Newt}.
    \item [(iii)] \textit{Compute the nonlinear parameters}
    \begin{equation*}
        \bb^{(k+1)} = \bb^{(k)} +\mathbf{p}^{(k)}.
    \end{equation*}
    \item [(iv)] \textit{Redistribute non-contributing breakpoints} $b_i^{(k+1)}$ for all $i \in S_1$ and sort $\bb^{(k+1)}$.   
\end{itemize}

\begin{algorithm}
    \caption{Reduced block Newton (rBN) method for \cref{min_pde}} \label{alg:dBN} 
    \begin{algorithmic}
    \REQUIRE{Initial network parameters $\bb^{(0)}$}
    \ENSURE{Network parameters $\bc$, $\bb$}
    \FOR{$k=0,1,\ldots$}
    \STATE $\triangleright$ \textit{Linear parameters}
    \STATE{$\bc^{(k+1)}\leftarrow \bc^{(k)} - {\cal H}_{11}\left(\btheta^{(k)}\right)^{-1}\nabla_{\bc}F\left(\btheta^{(k)}\right)$}
    \STATE $\triangleright$
    \textit{Nonlinear parameters}
    \STATE{{\textit{Compute the search direction} }$\mathbf{p}^{(k)}$ \textit{as in}} \cref{directionVector}
    \STATE{$\bb^{(k+1)} \leftarrow \bb^{(k)} + \mathbf{p}^{(k)}$}
    \STATE $\triangleright$ \textit{Redistribute non-contributing neurons and sort $\bb^{(k+1)}$}
\ENDFOR
    \end{algorithmic}
\end{algorithm}

\begin{remark}\label{rmk_redist}For the redistribution used in step (iv) in the numerical experiments reported in \cref{Section numerical exp}, we followed the procedure described in \cite{CaiDokFalHer2024a, CaiDokFalHer2024b}: for a neuron $b_l^{(k+1)}$ with $l \in S_1$, we set 
\[
    b_l^{(k+1)} \gets \frac{b^{(k+1)}_{m-1} + b^{(k+1)}_{m}}{2},
\]
where $m \in \{1, \dots, n+1\}$ is an integer chosen uniformly at random.
\end{remark}


\subsection{Local Convergence}
This subsection establishes a local convergence condition for the BN method applied to the minimization problem in \cref{min_pde}. The condition follows from the following theorem proved in \cite{cai_conv}. Here, we assume that $f(x) \in C(I)$ and that $a(x) \in C^1(I)$.

\begin{theorem}\label{main_lemma}
    If $\nabla^2_{\scriptsize{\btheta}}F(\btheta^{*})$ is symmetric positive definite (SPD), then  
    the BN method described by \cref{GS} converges locally to $\btheta^{*}$ in the norm induced by $\nabla^2_{\scriptsize{\btheta}}F(\btheta^{*})$.
    \end{theorem}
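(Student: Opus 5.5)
We view \cref{GS} as a fixed-point iteration $\btheta^{(k+1)}=\Phi(\btheta^{(k)})$ and show that the Jacobian of $\Phi$ at $\btheta^*$ has spectral radius less than one. More precisely, we show it is a strict contraction in the energy norm $\|\bv\|_{\cal H}^2=\bv^T{\cal H}^*\bv$, where ${\cal H}^*=\nabla^2_{\btheta}F(\btheta^*)$. Local convergence then follows from the Ostrowski theorem.

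\textbf{Setup and regularity.} Under $f\in C(I)$ and $a\in C^1(I)$, $F$ is $C^2$ near $\btheta^*$, and $\nabla_{\btheta}F(\btheta^*)=0$. Write ${\cal H}^*=L+D+L^T$, where $D=\mathrm{diag}({\cal H}^*_{11},{\cal H}^*_{22})$ and $L=\begin{pmatrix}0&0\\ {\cal H}^*_{21}&0\end{pmatrix}$. Since ${\cal H}^*$ is SPD, its diagonal blocks ${\cal H}^*_{11}$ and ${\cal H}^*_{22}$ are SPD. By continuity they remain invertible near $\btheta^*$, so $\Phi$ is well defined and differentiable in a neighborhood.

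\textbf{Linearization.} First, $\bc^{(k+1)}=\bc^{(k)}-{\cal H}_{11}(\btheta^{(k)})^{-1}\nabla_{\bc}F(\btheta^{(k)})$. Since $\nabla_{\bc}F(\btheta^*)=0$, the derivative of the matrix factor does not enter at the fixed point. Expanding to first order in $\be^{(k)}=\btheta^{(k)}-\btheta^*$ gives
\[
\be_{\bc}^{(k+1)}=-({\cal H}^*_{11})^{-1}{\cal H}^*_{12}\be_{\bb}^{(k)}+o(\|\be^{(k)}\|).
\]
Similarly, the $\bb$-update is evaluated at $(\bc^{(k+1)},\bb^{(k)})$, which gives
\[
{\cal H}^*_{22}\be_{\bb}^{(k+1)}=-{\cal H}^*_{21}\be_{\bc}^{(k+1)}+o(\|\be^{(k)}\|).
\]
Together these say $\Phi'(\btheta^*)=-(D+L)^{-1}L^T$. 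This is the block Gauss–Seidel iteration matrix for ${\cal H}^*$.

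\textbf{Contraction.} We now show $\|(D+L)^{-1}L^T\|_{\cal H}<1$, which is the block version of the classical Ostrowski–Reich theorem. Put $G=I-(D+L)^{-1}{\cal H}^*$. For $\bv\neq0$, let $\bw=(D+L)^{-1}{\cal H}^*\bv$. The standard identity is
\[
\|G\bv\|^2_{\cal H}=\|\bv\|^2_{\cal H}-\bw^T D\bw .
\]
It follows from $(D+L)+(D+L)^T-{\cal H}^*=D$. Because $D$ is SPD and $\bw\neq0$, we get $\|G\bv\|_{\cal H}<\|\bv\|_{\cal H}$. By compactness of the unit sphere, $q:=\|G\|_{\cal H}<1$.

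\textbf{Conclusion and main obstacle.} Choose a neighborhood on which the $o(\|\be\|)$ terms are bounded by $\varepsilon\|\be\|_{\cal H}$ with $q+\varepsilon<1$. Then $\|\be^{(k+1)}\|_{\cal H}\le (q+\varepsilon)\|\be^{(k)}\|_{\cal H}$, and the iterates stay in the ball and converge. The main obstacle is justifying the linearization. We need uniform differentiability of ${\cal H}$ and $\nabla F$ near $\btheta^*$. This is the reason for the continuity hypotheses: the $g_i$ involve $f(b_i)$ and $a'(b_i)$, so ${\cal H}_{22}$ is continuous only if $f$ and $a'$ are continuous. We also need $\nabla F$ to be $C^1$ in $\bb$ while the breakpoints stay ordered and distinct near $\btheta^*$. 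We would state this neighborhood restriction explicitly, so that the contraction estimate is applied with only first-order (continuity-of-Hessian) remainders.
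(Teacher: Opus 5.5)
Your proposal is correct. It is the standard argument behind this result; the paper gives no proof of its own and simply cites \cite{cai_conv}. At the critical point the BN map of \cref{GS} linearizes to the block Gauss--Seidel iteration matrix $-(D+L)^{-1}L^T$ of $\nabla^2_{\btheta}F(\btheta^*)$, and your identity $\|G\bv\|^2_{\cal H}=\|\bv\|^2_{\cal H}-\bw^TD\bw$ makes that matrix a strict contraction in the norm induced by $\nabla^2_{\btheta}F(\btheta^*)$.

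Two small points need fixing. First, your claim that $F\in C^2$ near $\btheta^*$ under only $f\in C(I)$ and $a\in C^1(I)$ is not quite right: it also requires $r_1,r_2$ to be continuous, because the $g_i$ contain $\hat r(b_i;\btheta)$. The paper leaves this hypothesis implicit as well. Second, you should note explicitly that the intermediate point $(\bc^{(k+1)},\bb^{(k)})$ stays in the neighborhood. This holds because $\|\bc^{(k+1)}-\bc^*\|=O(\|\btheta^{(k)}-\btheta^*\|)$.
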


If $c_j^* \neq 0$ for all $j = 1, \ldots, n$, it follows from \cref{opt_b_pde} that ${\cal R}_{jj}(\btheta^*)$ vanishes at the critical point $\btheta^*$. Consequently, ${\cal R}(\btheta^*) = {\bf 0}$, and the Hessian matrix at $\btheta^*$ reduces to the following form:
\begin{equation*}
            \nabla^2_{\scriptsize{\btheta}}F(\btheta^*) =  {\cal D}(\hat{\bc}^*) \mathbb{H}_{\Sigma+ \Lambda}(\btheta^*) {\cal D}(\hat{\bc}^*) + {\cal D}_0(\btheta^*).
        \end{equation*}

Let 
$r_0 =  r_0(\btheta) = \displaystyle\min_{x\in I}\{3 r_1(x)(u_n(x))^2 + r_2(x)\}$. The following theorem is obtained in a similar fashion to Theorem 4.4 in \cite{cai_conv}.


\begin{theorem}\label{thmDR}Let $\btheta^{*} = (\bc^*,\bb^*)^T$ be a minimizer of problem \cref{min_pde}. For all $i =1, \dots, n$, assume that $c^*_i \neq 0$ and 
        \begin{equation*}
            \frac{g_i^*}{c_i^*}+ \dfrac{r_0^*\tilde{h}^*_i}{24} > \dfrac{a^2(b^*_i)}{2\mu_0}\left(\dfrac{1}{h_{i-1}^*} + \dfrac{1}{h_{i}^{*}}\right).
        \end{equation*}
Then the Hessian matrix at the critical point $\btheta^{*}$, $\nabla^2_{\scriptsize{\btheta}} F(\btheta^{*}) =\nabla^2_{\scriptsize{\btheta}}J(u_n(\cdot;\btheta^{*}))$, is SPD. Moreover, the BN method locally converges to $\btheta^{*}$ in the norm induced by $\nabla^2_{\scriptsize{\btheta}}F(\btheta^{*})$.
    \end{theorem}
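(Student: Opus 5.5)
The second assertion follows from the first by \cref{main_lemma}, so the work is to show that $\nabla^2_{\btheta}F(\btheta^*)$ is SPD. Since $c_i^*\neq 0$, we have ${\cal R}(\btheta^*)={\bf 0}$, and the Hessian is ${\cal D}(\hat{\bc}^*)\mathbb{H}_{\Sigma+\Lambda}(\btheta^*){\cal D}(\hat{\bc}^*)+{\cal D}_0(\btheta^*)$. Because ${\cal D}(\hat{\bc}^*)$ is invertible, I substitute $\bxi=(\bxi_c,\bxi_b)$ with $\bxi_b=\bD_n(\hat\bc^*)\boldsymbol\eta$. The quadratic form then becomes
\[
\bxi^T\nabla^2F\bxi=(\bxi_c,\boldsymbol\eta)^T\mathbb{H}_{\Sigma+\Lambda}(\bxi_c,\boldsymbol\eta)+\sum_i \frac{g_i^*}{c_i^*}(c_i^*\eta_i)^2 .
\]
Define $\boldsymbol\zeta=\bD_n(\hat\bc^*)\boldsymbol\eta$, so $\zeta_i=c_i^*\eta_i$ (equivalently $\zeta_i=\xi_{b,i}$). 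With the functions $w=\bxi_c^T\bSigma_{n+1}$ and $v=\boldsymbol\zeta^T\bH_n$, the blocks give the identities
\[
\bxi_c^TM\bxi_c-2\bxi_c^TB\boldsymbol\zeta+\boldsymbol\zeta^TA_n\boldsymbol\zeta=\int_0^1 r(w-v)^2\,dx,
\qquad
\bxi_c^TA_{n+1}\bxi_c-2\bxi_c^TC\boldsymbol\zeta=\int_0^1 a(w')^2\,dx-2\sum_i a(b_i^*)w'(b_i^*)\zeta_i .
\]
Here $w'(b_i)$ is the average of the one-sided values, following the convention $H(0)=1/2$. In the scaled variables the quadratic form therefore reads
\[
\int_0^1 a(w')^2\,dx+\int_0^1 r(w-v)^2\,dx-2\sum_i a(b_i)w'(b_i)\zeta_i+\sum_i\frac{g_i}{c_i}\zeta_i^2 .
\]

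The key steps are as follows.

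First, I bound the point-evaluation cross term. Since $w'$ is piecewise constant on $I_j$, it satisfies $|w'(b_i)|\le\frac12(|w'_{i-1}|+|w'_i|)$, where $w'_j$ is its value on $I_j$. Young's inequality then gives
\[
2a(b_i)|w'(b_i)\zeta_i|\le \frac{\mu_0}{2}\bigl(h_{i-1}(w'_{i-1})^2+h_i(w'_i)^2\bigr)+\frac{a^2(b_i)}{2\mu_0}\Bigl(\frac1{h_{i-1}}+\frac1{h_i}\Bigr)\zeta_i^2 .
\]
Each interval is shared by at most two breakpoints, so summing over $i$ absorbs the first part into $\int_0^1 a(w')^2\,dx$. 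The constant must be tuned so that exactly the stated factor $a^2/(2\mu_0)$ appears; I may need to retain part of $\int a(w')^2$ for the next step.

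Second, I need a lower bound for the reaction term $\int_0^1 r(w-v)^2\,dx\ge r_0\int_0^1 (w-v)^2\,dx$ in terms of $\sum_i\tilde h_i\zeta_i^2$. Here $w-v$ is piecewise linear with jump $-\zeta_i$ at $b_i$. On each interval I will use the elementary inequality that for a linear function $\ell$ on $[0,h]$ and its one-sided traces, a jump of size $|\zeta|$ between two adjacent linear pieces forces $\int_{I_{i-1}\cup I_i}(w-v)^2\,dx\ge \frac{\tilde h_i}{12}\zeta_i^2$, minus a term controlled by the slopes. The overlap of adjacent pairs of intervals gives another factor of $1/2$, producing $r_0\tilde h_i/24$. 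The mismatch in slopes is where $\int a(w')^2$ might be needed again, so the cleanest route is to minimize over linear functions directly. Minimizing $\int_0^h\ell^2+\int_0^{h'}m^2$ subject to $m(0)-\ell(h)=\zeta$, with free slopes, gives a lower bound $\sim\min(h,h')\zeta^2/c$ with an explicit $c$, and no slope control is needed. This is the step I expect to be the main obstacle: getting the constant $1/24$ without using the diffusion term, and handling the overlap bookkeeping at $i=1$, where $\tilde h_1=h_1$ and the interval $I_0$ is adjacent to the Dirichlet end.

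Finally, I combine the two bounds. The form is then at least $\sum_i\bigl[\frac{g_i}{c_i}+\frac{r_0\tilde h_i}{24}-\frac{a^2(b_i)}{2\mu_0}(\frac1{h_{i-1}}+\frac1{h_i})\bigr]\zeta_i^2$ plus nonnegative leftovers $\frac12\int a(w')^2\,dx$. This is positive under the hypothesis unless $\boldsymbol\zeta=0$. In that case the form reduces to $\int_0^1 a(w')^2\,dx+\int_0^1 r w^2\,dx$, and this is positive because $w(0)=0$ and $w'\not\equiv0$ for $\bxi_c\ne0$, by the positive definiteness of ${\cal H}_{11}$. Strict positivity in both cases yields SPD, and \cref{main_lemma} gives local convergence.
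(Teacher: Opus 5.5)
\textbf{Overall assessment.} Your route matches what the paper indicates; it only refers to Theorem 4.4 of \cite{cai_conv}. You use ${\cal R}(\btheta^*)={\bf 0}$, rescale by ${\cal D}(\hat\bc^*)$, control the point-evaluation term by Young's inequality, control the reaction term through the jumps of $w-v$, and conclude with \cref{main_lemma}. Your Step 1 is correct and produces exactly $\frac{a^2(b_i)}{2\mu_0}\bigl(\frac1{h_{i-1}}+\frac1{h_i}\bigr)$.

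There are two slips, neither fatal. First, the change of variables should be $\boldsymbol\zeta=\bD_n(\hat\bc^*)\bxi_b$, not $\zeta_i=\xi_{b,i}$; your final quadratic form is correct with this $\boldsymbol\zeta$. Second, Step 1 consumes all of $\mu_0\sum_j h_j(w_j')^2$ on the interior intervals, so there is no leftover $\frac12\int_0^1 a(w')^2\,dx$. This is harmless, because the case $\boldsymbol\zeta={\bf 0}$ is covered by ${\cal H}_{11}$ being SPD.

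\textbf{Step 2 for $i\ge 2$.} No slope control is needed. For a linear $\ell$ on an interval of length $h$ with end values $p,q$, one has $\int\ell^2=\frac h3(p^2+pq+q^2)\ge\frac h6(p^2+q^2)$, so
\[
\int_{I_{i-1}\cup I_i}(w-v)^2\,dx\ \ge\ \frac{\min\{h_{i-1},h_i\}}{6}\Bigl[(w-v)(b_i^-)^2+(w-v)(b_i^+)^2\Bigr]\ \ge\ \frac{\min\{h_{i-1},h_i\}}{12}\,\zeta_i^2 .
\]
Each interval is used at most twice, so $\int_0^1 r(w-v)^2\,dx\ge\sum_i\frac{r_0\min\{h_{i-1},h_i\}}{24}\zeta_i^2$. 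This reproduces the constant $1/24$ and settles every $i\ge 2$.

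\textbf{The gap at $i=1$.} You flagged this case but left it open, and it cannot be closed with $\tilde h_1=h_1$. That convention is legitimate in \cref{sufff} because $\bH_n$ vanishes on $I_0$. Here, however, $w-v=\xi_{c,0}\,x$ on $I_0$ is unconstrained.

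Take $w=(\sigma_0-\sigma_1)/h_0$, so $w=x/h_0$ on $I_0$ and $w\equiv 1$ on $[b_1,1]$, and take $\zeta_1=1$, $\zeta_i=0$ for $i\ge 2$. Then $w-v$ vanishes on $(b_1,1]$ and $\int_0^1(w-v)^2\,dx=h_0/3$. So your Step 2 bound with $\tilde h_1=h_1$ fails as soon as $h_0<h_1/8$. The Dirichlet end improves the constant on $I_0$, but the bound stays proportional to $h_0$.

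The diffusion term does not rescue it. For $a\equiv\mu_0$, the diffusion and point terms cancel exactly in this direction ($\mu_0/h_0-\mu_0/h_0=0$), so the form equals $\int_0^{b_1}r(w-v)^2\,dx+g_1/c_1$. The hypothesis only asks that $g_1/c_1>\frac{\mu_0}{2}\bigl(\frac1{h_0}+\frac1{h_1}\bigr)-\frac{r_0h_1}{24}$. With $r\equiv r_0$, $h_0=h_1/16$ and $\mu_0<r_0h_1^2/408$, this admits values of $g_1/c_1$ for which the form is negative. At an actual minimizer the Hessian is positive semidefinite, so such data cannot occur there. The point is that the $i=1$ hypothesis is not what yields positivity, so no coercivity estimate of your type can finish this case.

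\textbf{Conclusion.} Your argument, completed as above, proves the theorem with $\tilde h_1$ replaced by $\min\{h_0,h_1\}$. The statement as written, with $\tilde h_1=h_1$, needs an additional ingredient that your plan does not contain.
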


\section{Allen–Cahn Equation}\label{sec4}Consider the scalar Allen–Cahn equation in one spatial dimension
\begin{equation}\label{AC_PDE}
    u_t(x, t) = -\mathcal{L}(u), \quad (x, t) \in I \times [0, T] 
\end{equation}
with initial and boundary conditions
\begin{equation*}
    \left\{\begin{array}{ll}
        u(x, 0) = u_0(x), & x \in I \quad\mbox{and} \\[2mm]
        u(0, t) = \alpha(t),  \quad \varepsilon^2u_x(1, t) = \beta(t), & t \in [0, T],
    \end{array}\right.
\end{equation*}
respectively, where $\varepsilon> 0$ is a given parameter, $T>0$ is the final time, and the semi-linear operator $\mathcal{L}(v)$ is given by
\[
\mathcal{L}(v)= \mathcal{L}_1(v) + \mathcal{L}_2(v) \quad\mbox{with }\, \mathcal{L}_1(v)= -\varepsilon^2 v_{xx}(x, t) + v(x, t)^3 \,\mbox{ and }\,
\mathcal{L}_2(v)=  - v(x, t). 
\]
Equation \cref{AC_PDE} is the $L^2$ gradient flow of the Ginzburg–Landau free energy \cite{doi:10.1137/0730084} 
\begin{equation*}
    {\cal E}(v) = \frac{1}{2}\int_{0}^1 \varepsilon^2 v'(x)^2 \, dx 
    + \int_0^1 {\cal F}(v(x)) \, dx,
\end{equation*}
where ${\cal F}(v) = \dfrac{1}{4}\bigl(v^2 - 1\bigr)^2$ is the double-well potential. 

\subsection{Convex-Splitting Shallow Ritz Method}\label{CSSR}

Below, we describe a convex-splitting shallow Ritz method for the semi-linear parabolic partial differential equation in \cref{AC_PDE} by adopting  a convex splitting scheme (CSS) (see \cite{doi:10.1137/0730084}) for time discretization and the shallow Ritz approach (see \cref{sec1}) for spatial discretization at each time step. This discretization leads to a non-convex optimization problem at each time step that can be efficiently solved by the block Newton method developed in \cref{sec3.3}. 

The double-well potential ${\cal F}(v)$ admits the following convex splitting 
\[
{\cal F}(v) = {\cal F}_c(v)  - {\cal F}_e(v) \quad\mbox{with }\, {\cal F}_c(v)= \frac{1}{4} \bigl(v^4 + 1\bigr) \,\mbox{ and }\, {\cal F}_e(v)= \frac{1}{2} v^2,
\]
which leads to a convex splitting of the energy functional 
\begin{equation*}
    {\cal E}(v) = {\cal E}_c(v) - {\cal E}_e(v),
\end{equation*}
where ${\cal E}_c(v)$ and ${\cal E}_e(v)$ are convex functionals given by
\begin{equation}\label{splitE}
    {\cal E}_{c}(v) = \frac{1}{2}\int_{0}^1 \varepsilon^2 v'(x)^2 \, dx 
    + \frac{1}{4}\int_0^1 \bigl(v(x)^4 + 1\bigr)\, dx
    \,\mbox{ and }\,
    {\cal E}_{e}(v) = \frac{1}{2}\int_0^1 v(x)^2 \, dx,   
\end{equation}
respectively. 

To describe a convex splitting scheme, partition the temporal interval $[0,T]$ by
\[
0=t_0 < t_1 < \cdots < t_N=T.
\]
Denote by $\Delta t_m = t_m-t_{m-1} > 0$ the time step at time $t_{m}$ and by $\Delta t = \max\limits_{1\leq m \leq N}\Delta t_m$ the largest step size. Let $u^{(m-1)}(x)$ be the approximation of the solution $u(x,t)$ at time $t_{m-1}$. The approximation $u^{(m)}(x)$ at time $t_m$ by the first-order CSS (see, e.g., \cite{doi:10.1137/0730084, shen2010numerical}) is the solution of the following semi-linear diffusion-reaction problem
\begin{equation}\label{csPDE}
    \left\{
    \begin{array}{ll}
        \dfrac{u^{(m)}(x) - u^{(m-1)}(x)}{\Delta t_m}   
    = -\mathcal{L}_1\left(u^{(m)}(x)\right)  - \mathcal{L}_2\left(u^{(m-1)}(x)\right) , &  x \in I, \\[4mm]
      u^{(m)}(0) = \alpha(t_{m}), \quad   {\varepsilon^2} u^{(m)}_x(1) = \beta(t_{m}). &
    \end{array}
    \right.
\end{equation}
This temporal discretization scheme is unconditionally energy stable, i.e., $\mathcal{E}(u^{(m)}) \leq \mathcal{E}(u^{(m-1)})$ for all $m = 1, \ldots, N$.

\begin{remark}
For second-order semi-implicit schemes, one may consider the following second-order BDF scheme:
\begin{equation*}
\dfrac{3u^{(m)} - 4u^{(m-1)} + u^{(m-2)}}{2\Delta t} = \varepsilon^2 u^{(m)}_{xx} - 2\mathcal{F}'(u^{(m-1)}) + \mathcal{F}'(u^{(m-2)}),
\end{equation*}
or the stabilized version presented in {\em \cite{shen2010numerical}}, which can be written in the form of \cref{pde}. However, these schemes are neither unconditionally stable nor unconditionally energy stable.
\end{remark}

Clearly, \cref{csPDE} can be expressed in the form of \cref{pde} with
\begin{equation}\label{dr-tm}
    a(x) = \varepsilon^2, \,\, r_1(x) = 1, \,\, r_2(x) = \tfrac{1}{\Delta t_m}, \,\mbox{ and }\,
f(x) = f^{(m)}(x) = \bigl(1 + \tfrac{1}{\Delta t_m}\bigr)u^{(m-1)}(x).
\end{equation}
Define the energy functional at time $t_m$ by
\begin{equation*}
    J^{(m)}(v)  = \frac{1}{2}\int_{0}^1\left(\varepsilon^2v'(x)^2
    + \frac{1}{2}v(x)^4 + \tfrac{1}{\Delta t_m} v(x)^2\right)dx - f^{(m)}(v),
\end{equation*}
where the linear functional $f^{(m)}(\cdot)$ is given by
\[
f^{(m)}(v) = \bigl(1 + \tfrac{1}{\Delta t_m}\bigr) \int_0^1 u^{(m-1)}(x)\,v(x)\,dx
+ \beta(t_{m})\,v(1).
\]
With ${\cal V}_m = H^{1}(I)\cap \{v(0)= \alpha(t_m)\}$, the minimization formulation
of problem \cref{csPDE} is to find $u^{(m)}(x) \in {\cal V}_m$ such that
\begin{equation}\label{umn}
    u^{(m)}(x) = \argmin_{v \in {\cal V}_m} J^{(m)}(v).
\end{equation}


For spatial discretization, let $u_n^{(0)}(x) \in {\cal M}^{\alpha(0)}_n(I)$ be the following least-squares approximation of the initial data $u_0(x)$:
\begin{equation}\label{u_0LS}
    u_n^{(0)}(x) = \argmin_{v \in {\cal M}^{\alpha(0)}_n(I)} \frac{1}{2}\int_0^1 \bigl(v(x) - u_0(x)\bigr)^2 \, dx 
    = \argmin_{v \in {\cal M}^{\alpha(0)}_n(I)} \frac{1}{2}\big\|v - u_0\big\|^2_{L^2(I)}.
\end{equation}
For computing $u_n^{(0)}(x)$ efficiently, see section~6 in \cite{CaiDokFalHer2024b}. For $m=1,\ldots,N$, define the functional $J_n^{(m)}(\cdot)$ at time $t_m$ by
\begin{equation}\label{functionalACn}
    J_n^{(m)}(v) = \frac{1}{2}\int_{0}^1\left(\varepsilon^2v'(x)^2
    + \frac{1}{2}v(x)^4 + \tfrac{1}{\Delta t_m} v(x)^2\right)dx - f_n^{(m)}(v),
\end{equation}
where $f_n^{(m)}(v)$ is the linear form given by
\[
f_n^{(m)}(v) = \left(1 + \dfrac{1}{\Delta t_m}\right) \int_0^1 u_n^{(m-1)}(x)\,v(x)\,dx
+ \beta(t_{m})\,v(1).
\]
Then the convex-splitting shallow Ritz approximation $u_n^{(m)}(x) \in {\cal M}^{\alpha(t_{m})}_n(I)$ at time $t_m$ is obtained by
minimizing $J_n^{(m)}(\cdot)$ over ${\cal M}^{\alpha(t_{m})}_n(I)$, i.e.,
\begin{equation}\label{minAC}
    u_n^{(m)}(x) = \argmin_{v \in {\cal M}^{\alpha(t_{m})}_n(I)} J_n^{(m)}(v).
\end{equation}

The minimization problem in \cref{minAC} is non-convex and usually has many solutions and/or many local minima; therefore, it is critical to start with a good initial guess in order to obtain the desired solution. Generally, the approximation at the previous time step $u_n^{(m-1)}(x)$ is a good initialization for the current approximation  $u_n^{(m)}(x)$, and so are the corresponding parameters. Pseudocode for the convex-splitting shallow Ritz method is given in \cref{alg:BNAC}.

\begin{algorithm}
    \caption{Convex-splitting shallow Ritz method for the Allen–Cahn equation}\label{alg:BNAC}
    \begin{algorithmic}
    \REQUIRE Initial and boundary conditions $u_0(x)$, $\alpha(t)$, and $\beta(t)$
    \ENSURE Network parameters $\bigl\{(\bc_m, \bb_m)\bigr\}_{m=0}^{N}$
    \STATE $\triangleright$ \textit{Compute the parameters $(\bc_0, \bb_0)$ by solving}
    \STATE{$u_n^{(0)}(x) = u_n(x; \bc_0, \bb_0) = \displaystyle\argmin_{v \in{\cal M}^{\alpha(0)}_n(I)}\|v - u_0\|^2_{L^2(I)}$}
    \FOR{$m = 1, \ldots, N$}
        \STATE  $\triangleright$ \textit{Compute the parameters $(\bc_m, \bb_m)$ by solving the minimization problem \cref{minAC} using the rBN method initialized with $(\bc_{m-1}, \bb_{m-1})$}
    \ENDFOR
    \end{algorithmic}
\end{algorithm}

\subsection{Stability and Error Analysis} 

This subsection analyzes the stability and error of the convex-splitting shallow Ritz method described in \cref{CSSR}. For stability and convergence analysis of other convex splitting schemes, we refer the reader to \cite{guan2017convergence, shen2010numerical} and the references therein.

Let $(\cdot,\cdot)$ and $\|\cdot\|$ denote the $L^2(I)$ inner product and norm, respectively. The next lemma will be used to prove an unconditional stability result.

\begin{lemma}\label{NNenergystab}
Assume that the boundary data are independent of time, $\alpha(t)\equiv\alpha$ and
$\beta(t)\equiv\beta$, and that $u_n^{(m-1)} \in {\cal M}^{\alpha}_n(I)$. 
If $v_n \in {\cal M}^{\alpha}_n(I)$ satisfies
$J_n^{(m)}\bigl(v_n\bigr) \le J_n^{(m)}\bigl(u_n^{(m-1)}\bigr)$, then
\begin{equation}\label{NNdecay}
{\cal E}\bigl(v_n\bigr) - {\cal E}\bigl(u_n^{(m-1)}\bigr)
+ \frac{1}{2}\Bigl(1 + \frac{1}{\Delta t_m}\Bigr)
\bigl\|v_n - u_n^{(m-1)}\bigr\|^2 \leq \beta\Bigl(v_n(1) - u_n^{(m-1)}(1)\Bigr)
\end{equation}
for all $m = 1, \ldots, N$.
\end{lemma}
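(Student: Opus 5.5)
The plan is a direct algebraic computation: expand $J_n^{(m)}(v_n) - J_n^{(m)}(u_n^{(m-1)})$ and rewrite it in terms of the Ginzburg--Landau energy ${\cal E}$, a quadratic term in $v_n - u_n^{(m-1)}$, and the boundary term. No minimization property is needed beyond the hypothesis $J_n^{(m)}(v_n)\le J_n^{(m)}(u_n^{(m-1)})$. Write $w=u_n^{(m-1)}$, $v=v_n$ and $\tau=\Delta t_m$; both $v$ and $w$ lie in $H^1(I)$ with $v(0)=w(0)=\alpha$.

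First, I will express $J_n^{(m)}$ through ${\cal E}$. Using ${\cal F}(v)=\frac14(v^4+1)-\frac12 v^2$, we have
\[
\frac12\int_0^1\Bigl(\varepsilon^2 v'^2+\frac12 v^4\Bigr)dx={\cal E}(v)+\frac12\|v\|^2-\frac14 .
\]
Hence
\[
J_n^{(m)}(v)={\cal E}(v)-\frac14+\frac12\Bigl(1+\frac1\tau\Bigr)\|v\|^2-\Bigl(1+\frac1\tau\Bigr)(w,v)-\beta v(1).
\]
The same formula holds with $v$ replaced by $w$ in the first argument; the linear form still involves $w$, because it is built from $u_n^{(m-1)}$.

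Second, I will subtract the two expressions. With $\kappa=1+1/\tau$, the quadratic and linear parts combine as
\[
\frac\kappa2\|v\|^2-\kappa(w,v)-\frac\kappa2\|w\|^2+\kappa\|w\|^2=\frac\kappa2\|v-w\|^2 .
\]
This gives the identity
\[
J_n^{(m)}(v)-J_n^{(m)}(w)={\cal E}(v)-{\cal E}(w)+\frac\kappa2\|v-w\|^2-\beta\bigl(v(1)-w(1)\bigr).
\]
Finally, I will apply the hypothesis that the left-hand side is nonpositive, which yields \cref{NNdecay}.

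The only point requiring care is bookkeeping. The linear form $f_n^{(m)}$ is evaluated at both $v$ and $w$ with the same $u_n^{(m-1)}$, and the constant $\frac14$ cancels. Time-independence of $\alpha$ is used only to ensure that $w$ is admissible in ${\cal M}^{\alpha}_n(I)$, so that the comparison makes sense. Time-independence of $\beta$ lets the same $\beta$ multiply both boundary values. Beyond this the proof is routine, and I do not expect a genuine obstacle.
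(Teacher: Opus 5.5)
Your proposal is correct and matches the paper's proof: the paper collects the quadratic and linear terms into $G_n^{(m)}(v)$, writes $J_n^{(m)}(v)={\cal E}(v)+G_n^{(m)}(v)-\frac14-\beta v(1)$, and uses $G_n^{(m)}(v_n)-G_n^{(m)}(u_n^{(m-1)})=\frac12\bigl(1+\frac{1}{\Delta t_m}\bigr)\|v_n-u_n^{(m-1)}\|^2$, which is exactly your completing-the-square step. Your bookkeeping of the constant $\frac14$ and of the fixed linear form is right, so the hypothesis yields the inequality immediately.
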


\begin{proof}
Let 
\[
G_n^{(m)}(v) =\dfrac{1}{2} \left(1+ \frac{1}{\Delta t_m}\right)\left(\int^1_0v^2dx - 2\int^1_0u_n^{(m-1)}vdx \right).
\]
Then \cref{splitE} and \cref{functionalACn} give
\[
J_n^{(m)}\bigl(v\bigr) = {\cal E}(v) + G_n^{(m)}(v) -\dfrac14 -\beta v(1).
\]
Since $G_n^{(m)}(v_n)-G_n^{(m)}\left(u_n^{(m-1)}\right)= \frac{1}{2}\Bigl(1 + \frac{1}{\Delta t_m}\Bigr)
\left\|v_n - u_n^{(m-1)}\right\|^2$, we have
\[
J_n^{(m)}\bigl(v_n\bigr) - J_n^{(m)}\bigl(u_n^{(m-1)}\bigr)
= {\cal E}\bigl(v_n\bigr) - {\cal E}\bigl(u_n^{(m-1)}\bigr)
- \beta\Bigl(v_n(1) - u_n^{(m-1)}(1)\Bigr)
+ \frac{1}{2}\Bigl(1 + \frac{1}{\Delta t_m}\Bigr)
\bigl\|v_n - u_n^{(m-1)}\bigr\|^2,
\]
which, together with the assumption, implies \cref{NNdecay}. This completes the proof of the lemma.
\end{proof}

\begin{theorem}\label{NNenergystabcor}
Under the assumptions of \cref{NNenergystab}, the convex-splitting shallow Ritz approximation
$u_n^{(m)}$ defined in \cref{minAC} satisfies
\begin{equation}\label{NNdecaysum}
    {\cal E}\bigl(u_n^{(N)}\bigr)
    + \frac{1}{2}\sum_{m = 1}^{N}\Bigl(1 + \frac{1}{\Delta t_m}\Bigr)
    \bigl\|u_n^{(m)} - u_n^{(m-1)}\bigr\|^2
    \;\leq\; {\cal E}\bigl(u_n^{(0)}\bigr)
    + \beta\bigl(u_n^{(N)}(1) - u_n^{(0)}(1)\bigr).
\end{equation}
Moreover, if $\beta = 0$, then the method is unconditionally energy stable, i.e.,
\begin{equation}\label{NNenergydecay}
    {\cal E}\bigl(u_n^{(N)}\bigr) \;\leq\; {\cal E}\bigl(u_n^{(0)}\bigr),
\end{equation}
and unconditionally stable in the following measures
\begin{equation}\label{NNstab}
    \varepsilon\,\bigl|u_n^{(N)}\bigr|_{H^1(I)}
    \;\leq\; \sqrt{2\,{\cal E}\bigl(u_n^{(0)}\bigr)}
    \quad\mbox{and}\quad
    \bigl\|u_n^{(N)}\bigr\|_{L^\infty(I)}
    \;\leq\; |\alpha| + \frac{1}{\varepsilon}\sqrt{2\,{\cal E}\bigl(u_n^{(0)}\bigr)} .
\end{equation}
\end{theorem}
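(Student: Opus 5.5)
The plan is to apply \cref{NNenergystab} at every time level with $v_n=u_n^{(m)}$ and then telescope. Since $u_n^{(m)}$ minimizes $J_n^{(m)}$ over ${\cal M}^{\alpha}_n(I)$, and $u_n^{(m-1)}\in{\cal M}^{\alpha}_n(I)$ because the boundary data are time independent, we have $J_n^{(m)}(u_n^{(m)})\le J_n^{(m)}(u_n^{(m-1)})$. The membership $u_n^{(m-1)}\in{\cal M}^{\alpha}_n(I)$ follows by induction from $u_n^{(0)}\in{\cal M}^{\alpha(0)}_n(I)={\cal M}^{\alpha}_n(I)$. Hence \cref{NNdecay} holds for each $m=1,\ldots,N$ with $v_n=u_n^{(m)}$. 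Summing over $m$, the energy differences telescope to ${\cal E}(u_n^{(N)})-{\cal E}(u_n^{(0)})$, and the right-hand sides telescope to $\beta\bigl(u_n^{(N)}(1)-u_n^{(0)}(1)\bigr)$. This gives \cref{NNdecaysum}.

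For $\beta=0$, \cref{NNenergydecay} is immediate from \cref{NNdecaysum}, since the sum of squared increments is nonnegative. For the $H^1$ seminorm bound, observe that ${\cal F}(v)=\frac14(v^2-1)^2\ge 0$. Therefore
\[
\tfrac12\varepsilon^2|u_n^{(N)}|_{H^1(I)}^2\le {\cal E}(u_n^{(N)})\le {\cal E}(u_n^{(0)}),
\]
and taking square roots yields the first inequality in \cref{NNstab}.

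For the $L^\infty$ bound, use $u_n^{(N)}(0)=\alpha$ and the fundamental theorem of calculus, valid since $u_n^{(N)}$ is continuous and piecewise linear. For $x\in I$, Cauchy–Schwarz on $(0,1)$ gives
\[
|u_n^{(N)}(x)|\le|\alpha|+\int_0^1|u_n^{(N)\prime}|\,dx\le|\alpha|+|u_n^{(N)}|_{H^1(I)}.
\]
Combining this with the seminorm bound just proved gives the second inequality in \cref{NNstab}.

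No step is a genuine obstacle. The only point needing care is verifying the hypothesis of \cref{NNenergystab} at every step. This requires that $u_n^{(m)}$ be an exact (global) minimizer over ${\cal M}^{\alpha}_n(I)$, as \cref{minAC} assumes. It also requires that the previous iterate be admissible in the current class, which is exactly where the time independence of $\alpha$ is used.
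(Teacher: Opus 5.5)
Your proposal is correct and follows the paper's proof essentially step for step. You sum \cref{NNdecay} with $v_n=u_n^{(m)}$ over $m$, then use $\frac{\varepsilon^2}{2}|v|_{H^1(I)}^2\le{\cal E}(v)$, and finish with $u_n^{(N)}(0)=\alpha$, the fundamental theorem of calculus, and Cauchy–Schwarz. Your explicit check that $J_n^{(m)}(u_n^{(m)})\le J_n^{(m)}(u_n^{(m-1)})$, which follows from minimality together with $u_n^{(m-1)}\in{\cal M}^{\alpha}_n(I)$, is a detail the paper leaves implicit.
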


\begin{proof}
\cref{NNdecaysum} follows directly from the summation of \cref{NNdecay} with $v_n= u_n^{(m)}$ over $m$ from $1$ to $N$,  
and \cref{NNenergydecay} is a direct consequence of \cref{NNdecaysum} and the assumption that $\beta=0$. The first inequality in \cref{NNstab} follows from \cref{NNenergydecay} and the fact that 
\[
\frac{\varepsilon^2}{2}\,\bigl|v\bigr|_{H^1(I)}^2 \leq {\cal E}(v) \quad\forall\, \, v \in H^1(I).
\]
Finally, using $u_n^{(N)}(0) = \alpha$, the fundamental theorem of calculus, and the Cauchy–Schwarz
inequality, we have 
\[
\bigl|u_n^{(N)}(x)\bigr| \leq |\alpha| + \int_0^x \bigl|\bigl(u_n^{(N)}\bigr)'(s)\bigr|\,ds
\leq |\alpha| + \bigl|u_n^{(N)}\bigr|_{H^1(I)} ,
\]
for all $x \in I$, which, combined with the first inequality in \cref{NNstab}, gives the validity of the second inequality. This completes the proof of the theorem.
\end{proof}

The next lemma shows that the CSS is first-order accurate in time and makes explicit the dependence of the constant on the solution. 

\begin{lemma}\label{temporalerror}
Let $u(x,t)$ and $u^{(m)}(x)$ be the solutions of \cref{AC_PDE} and \cref{umn} with $u^{(0)} = u_0$, respectively.
Assume that $\Delta t\leq 1/4$ and that $u_t(x,t),\, u_{tt}(x,t) \in L^{\infty}(0,T;L^2(I))$. Then
\begin{equation}\label{temporalest}
\left\|u(\cdot,T) - u^{(N)}\right\|
\;\leq\; C_T \Delta t  ,
\end{equation}
where $\Delta t$ is the largest step size and $C_T$ is a constant given by
\[
C_T = \sqrt{2T}\,e^{3T} \left(\|u_t\|_{L^{\infty}(0,T;L^2(I))}
+ \dfrac12\|u_{tt}\|_{L^{\infty}(0,T;L^2(I))}\right).
\]
\end{lemma}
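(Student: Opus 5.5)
We follow the standard energy approach for the backward-Euler-type convex splitting scheme. Let $u^m(x)=u(x,t_m)$ and $e^m = u^m - u^{(m)}$, so $e^0=0$. Since both $u^m$ and $u^{(m)}$ satisfy the same Dirichlet condition at $x=0$ and the same Neumann condition at $x=1$, the error $e^m$ vanishes at $0$ and has homogeneous flux at $1$. Integration by parts against $e^m$ then produces no boundary terms.

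First, evaluate the PDE \cref{AC_PDE} at $t=t_m$ and subtract the scheme \cref{csPDE}. This gives the error equation
\[
\frac{e^m-e^{m-1}}{\Delta t_m} - \varepsilon^2 e^m_{xx} + \bigl((u^m)^3-(u^{(m)})^3\bigr) - e^{m-1} = \tau^m,
\]
where the truncation error is
\[
\tau^m = \frac{u^m-u^{m-1}}{\Delta t_m} - u_t(\cdot,t_m) + \bigl(u^{m}-u^{m-1}\bigr).
\]
The last term comes from the explicit treatment of $\mathcal{L}_2$. By Taylor's theorem with integral remainder, $\bigl\|\tfrac{u^m-u^{m-1}}{\Delta t_m}-u_t(t_m)\bigr\|\le \tfrac12\Delta t_m\|u_{tt}\|_{L^\infty(L^2)}$. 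Likewise, $\|u^m-u^{m-1}\|\le \Delta t_m\|u_t\|_{L^\infty(L^2)}$. Hence
\[
\|\tau^m\|\le \Delta t\,\Bigl(\|u_t\|_{L^\infty(0,T;L^2(I))}+\tfrac12\|u_{tt}\|_{L^\infty(0,T;L^2(I))}\Bigr)=:\Delta t\,K .
\]
This matches the bracket in $C_T$.

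Next, test the error equation with $e^m$ and multiply by $\Delta t_m$. The diffusion term gives $\varepsilon^2\|e^m_x\|^2\ge 0$, which we drop. The cubic term is nonnegative by monotonicity: $(a^3-b^3)(a-b)\ge 0$. For the time difference we use $(e^m-e^{m-1},e^m)\ge \tfrac12(\|e^m\|^2-\|e^{m-1}\|^2)$. The explicit term and the truncation term are bounded by Cauchy–Schwarz and Young:
\[
\Delta t_m(e^{m-1},e^m)\le \tfrac{\Delta t_m}{2}\bigl(\|e^{m-1}\|^2+\|e^m\|^2\bigr),
\qquad
\Delta t_m(\tau^m,e^m)\le \tfrac{\Delta t_m}{2}\bigl(\|\tau^m\|^2+\|e^m\|^2\bigr).
\]
Combining these bounds yields
\[
(1-2\Delta t_m)\|e^m\|^2\le (1+\Delta t_m)\|e^{m-1}\|^2+\Delta t_m\|\tau^m\|^2 .
\]

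Finally, apply a discrete Gronwall argument, which is the step where the constants must be tracked carefully. Since $\Delta t_m\le 1/4$, we have $1-2\Delta t_m\ge 1/2$, so $(1-2\Delta t_m)^{-1}\le 2$. We also have $\tfrac{1+\Delta t_m}{1-2\Delta t_m}\le 1+6\Delta t_m\le e^{6\Delta t_m}$, where the middle inequality reduces to $12\Delta t_m^2\le 3\Delta t_m$, which holds since $\Delta t_m\le 1/4$. Iterating from $e^0=0$ gives
\[
\|e^N\|^2\le \sum_{m=1}^N e^{6(t_N-t_m)}\,2\Delta t_m\|\tau^m\|^2\le 2T e^{6T}K^2\Delta t^2 .
\]
Taking square roots gives $\|e^N\|\le \sqrt{2T}\,e^{3T}K\,\Delta t = C_T\,\Delta t$, which is \cref{temporalest}.

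The main obstacle is getting exactly the constant $\sqrt{2T}e^{3T}$. If the Young splitting above does not produce it, I would instead test with $e^m$ and use $\|e^m\|\,\|e^{m-1}\|$-type bounds directly, dividing by $\|e^m\|$, to obtain a linear recursion for $\|e^m\|$.
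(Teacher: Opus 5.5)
Your proposal is correct and follows essentially the same route as the paper: the same truncation error (including the $u^m-u^{m-1}$ term coming from the explicit treatment of $\mathcal{L}_2$), the same test with $e^m$ and monotonicity of the cubic, and the same Young splitting giving $(1-2\Delta t_m)\|e^m\|^2\le(1+\Delta t_m)\|e^{m-1}\|^2+\Delta t_m\|\tau^m\|^2$. The discrete Gronwall step via $\frac{1+\Delta t_m}{1-2\Delta t_m}\le 1+6\Delta t_m$ is also the paper's, and it yields exactly $\sqrt{2T}\,e^{3T}$, so your fallback remark is unnecessary.
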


\begin{proof}
 The exact solution $u(x,t)$ at time $t_m$ satisfies the following equation
\begin{equation}\label{trunc}
\frac{u(x,t_m) - u(x,t_{m-1})}{\Delta t_m}
= - \mathcal{L}_1\left(u(x,t_m) \right) - \mathcal{L}_2\left(u(x,t_{m-1})\right) + R^{(m)}(x) ,
\end{equation}
where
\begin{align*}
R^{(m)}(x) &= \frac{u(x,t_m) - u(x,t_{m-1})}{\Delta t_m}
+ \mathcal{L}_1\bigl(u(x,t_m)\bigr) + \mathcal{L}_2\bigl(u(x,t_{m-1})\bigr) \\[2mm]
&= \left(\frac{u(x,t_m) - u(x,t_{m-1})}{\Delta t_m} - u_t(x,t_m)\right)
+ \Bigl(\mathcal{L}_2\bigl(u(x,t_{m-1})\bigr) - \mathcal{L}_2\bigl(u(x,t_m)\bigr)\Bigr) \\[2mm]
&= 
- \frac{1}{\Delta t_m}\int_{t_{m-1}}^{t_m} (t - t_{m-1})\,u_{tt}(x,t)\,dt + \int_{t_{m-1}}^{t_m} u_t(x,t)\,dt ,
\end{align*}
where the second equality uses \cref{AC_PDE} and the third uses Taylor's expansion with an integral remainder.
Therefore,
\begin{equation}\label{remainderbound}
\left\|R^{(m)}\right\| \leq \Delta t_m
\left(\|u_t\|_{L^{\infty}(0,T;L^2(I))}
+ \frac12\|u_{tt}\|_{L^{\infty}(0,T;L^2(I))}\right).
\end{equation}
For $m = 0,1,\ldots,N$, denote by $\eta^{(m)}(x) = u(x,t_m) - u^{(m)}(x)$ the error of temporal discretization. The difference of \cref{trunc} and \cref{csPDE} yields the error equation 
\begin{equation}\label{erroreq}
\frac{\eta^{(m)}(x) - \eta^{(m-1)}(x)}{\Delta t_m}
= - \left(\mathcal{L}_1\left(u(x,t_m) \right) - \mathcal{L}_1\left(u^{(m)}(x) \right) \right) +\eta^{(m-1)}(x) + R^{(m)}(x) .
\end{equation}
For all $x\in I$, notice that 
\[
2\left(\eta^{(m)}-\eta^{(m-1)}\right)\eta^{(m)} = \Bigl(\eta^{(m)}\Bigr)^2 - \Bigl(\eta^{(m-1)}\Bigr)^2 + \left(\eta^{(m)}-\eta^{(m-1)}\right)^2 \ge \Bigl(\eta^{(m)}\Bigr)^2 - \Bigl(\eta^{(m-1)}\Bigr)^2
\]
and that
\begin{align}\notag
   \left(u(x,t_m)^3 - u^{(m)}(x)^3\right) \eta^{(m)}(x) & = \left(u(x,t_m)^2 +u(x,t_m)u^{(m)}(x) + u^{(m)}(x)^2 \right) \eta^{(m)}(x)^2 \\[2mm] \label{3}
   & \ge \dfrac12 \left(u(x,t_m)^2 + u^{(m)}(x)^2 \right) \eta^{(m)}(x)^2\ge 0.
\end{align}
Multiplying \cref{erroreq} by $\eta^{(m)}$, integrating by parts over $I$, and using the above two inequalities, we have
\begin{align*}\notag 
\frac{1}{2\Delta t_m}\Bigl(\bigl\|\eta^{(m)}\bigr\|^2 - \bigl\|\eta^{(m-1)}\bigr\|^2\Bigr)
& \leq -\varepsilon^2\bigl|\eta^{(m)}\bigr|_{H^1(I)}^2
+ \bigl(\eta^{(m-1)}, \eta^{(m)}\bigr) + \bigl(R^{(m)}, \eta^{(m)}\bigr) \\[2mm] 
& \leq \dfrac12 \|\eta^{(m-1)}\|^2 + \|\eta^{(m)}\|^2 + \dfrac12 \|R^{(m)}\|^2,
\end{align*}
which, together with the assumption that $\Delta t_m \leq 1/4$ and the fact that $\eta^{(0)}(x) = 0$, implies
\begin{align*}
    \bigl\|\eta^{(m)}\bigr\|^2
& \leq \dfrac{1 + \Delta t_m}{1-2\Delta t_m}\, \bigl\|\eta^{(m-1)}\bigr\|^2
+ \dfrac{\Delta t_m}{1-2\Delta t_m} \,\bigl\|R^{(m)}\bigr\|^2 
\leq \bigl(1 + 6\Delta t_m\bigr)\bigl\|\eta^{(m-1)}\bigr\|^2
+ 2\Delta t_m\,\bigl\|R^{(m)}\bigr\|^2
\\[2mm]
& \leq 2\sum_{k=0}^{m-1} \prod_{l=1}^{k} (1+6\Delta t_{m+1-l}) \Delta t_{m-k} \bigl\|R^{(m-k)}\bigr\|^2  \leq 2 \prod_{i=1}^{m}\bigl(1 + 6\Delta t_i\bigr) \sum_{k=1}^{m} \Delta t_{k}\bigl\|R^{(k)}\bigr\|^2 \\[2mm]
& \leq 2t_m\, \exp\left(6\sum\limits_{i=1}^{m}\Delta t_i\right) \max_{1\leq i \leq m}\bigl\|R^{(i)}\bigr\|^2 = 2 t_m e^{6 t_m} \max_{1\leq i \leq m}\bigl\|R^{(i)}\bigr\|^2.
\end{align*}
Now, \cref{temporalest} follows from the above inequality with $m=N$, the equality $t_N=T$, and the bound in \cref{remainderbound}. This completes the proof of the lemma.
\end{proof}

Next, for each time step we estimate the error between the solutions of the
minimization problems in \cref{umn} and \cref{minAC}. 

\begin{lemma}\label{onesteperror}
Let $u^{(m)}(x)$ and $u^{(m)}_n(x)$ be the solutions of \cref{umn} and \cref{minAC}, respectively. Let 
\[
\varrho_{n,m}= \inf_{v \in {\cal M}^{\alpha(t_m)}_n(I)} E(\bar{u}^{(m)}-v; \varepsilon^2, 1, 1/\Delta t_m),
\]
where $\bar{u}^{(m)} \in  {\cal V}_m$ is the minimizer of the functional $J_n^{(m)}(\cdot)$ over ${\cal V}_m$, i.e., 
\begin{equation}\label{u_bar_m}
 \bar{u}^{(m)}(x) = \argmin_{v \in {\cal V}_m} J^{(m)}_n(v).
 \end{equation}
Then for each $m = 1,\dots,N$, we have that
\begin{align}\label{onestepest}
\bigl\|u^{(m)} - u_n^{(m)}\bigr\|
\leq\; \bigl(1 + \Delta t_m\bigr)
\bigl\|u^{(m-1)} - u_n^{(m-1)}\bigr\|
 \;+\; \sqrt{\Delta t_m}\;  \varrho_{n,m} ,
\end{align}
and that
\begin{equation}\label{finalstepest}
\bigl\|u^{(N)} - u_n^{(N)}\bigr\|
\;\leq\; e^{\,T}\bigl\|u_0 - u_n^{(0)}\bigr\|
\;+\; \sum_{m=1}^{N} e^{\,T - t_m}\sqrt{\Delta t_m}\;\varrho_{n,m} .
\end{equation}
\end{lemma}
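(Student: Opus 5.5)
We split the error with the intermediate function $\bar{u}^{(m)}$ of \cref{u_bar_m}, which solves the exact continuous problem but with the previous \emph{neural network} iterate $u_n^{(m-1)}$ as data. The triangle inequality gives
\[
\bigl\|u^{(m)} - u_n^{(m)}\bigr\| \le \bigl\|u^{(m)} - \bar{u}^{(m)}\bigr\| + \bigl\|\bar{u}^{(m)} - u_n^{(m)}\bigr\|.
\]
The first term measures how the data error propagates through one continuous CSS step. The second term is a pure spatial approximation error, controlled by \cref{l:error_estimate1}.

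\emph{Step 1 (stability of one continuous step).} Both $u^{(m)}$ and $\bar{u}^{(m)}$ solve \cref{pde} with the coefficients \cref{dr-tm} and the same boundary data. Their right-hand sides are $(1+1/\Delta t_m)u^{(m-1)}$ and $(1+1/\Delta t_m)u_n^{(m-1)}$, respectively. Set $w=u^{(m)}-\bar{u}^{(m)}$; then $w(0)=0$ and $\varepsilon^2 w'(1)=0$. Subtract the two weak formulations and test with $w$ (admissible since $w(0)=0$). The cubic term is nonnegative by the same argument as \cref{3}, and the diffusion term is nonnegative. This leaves
\[
\tfrac{1}{\Delta t_m}\|w\|^2 \le \Bigl(1+\tfrac{1}{\Delta t_m}\Bigr)\bigl(u^{(m-1)}-u_n^{(m-1)},w\bigr).
\]
Cauchy--Schwarz then yields $\|w\|\le (1+\Delta t_m)\|u^{(m-1)}-u_n^{(m-1)}\|$, which is exactly the first term of \cref{onestepest}.

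\emph{Step 2 (spatial error).} Note that $u_n^{(m)}$ minimizes $J_n^{(m)}$ over ${\cal M}^{\alpha(t_m)}_n(I)$, while $\bar{u}^{(m)}$ minimizes the same functional over ${\cal V}_m$. So \cref{l:error_estimate1} applies with $a=\varepsilon^2$, $r_1=1$, $r_2=1/\Delta t_m$, giving $\vertiii{\bar{u}^{(m)}-u_n^{(m)}}_{\varepsilon^2,1/\Delta t_m}\le \varrho_{n,m}$. Dropping the gradient part of the weighted norm gives $\Delta t_m^{-1/2}\|\bar{u}^{(m)}-u_n^{(m)}\|\le\varrho_{n,m}$, that is, $\|\bar{u}^{(m)}-u_n^{(m)}\|\le\sqrt{\Delta t_m}\,\varrho_{n,m}$. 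Combining Steps 1 and 2 proves \cref{onestepest}.

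\emph{Step 3 (accumulation).} Let $e_m=\|u^{(m)}-u_n^{(m)}\|$ and unroll the recursion:
\[
e_N\le \prod_{i=1}^N(1+\Delta t_i)\,e_0+\sum_{m=1}^N\prod_{i=m+1}^N(1+\Delta t_i)\sqrt{\Delta t_m}\,\varrho_{n,m}.
\]
Using $1+\Delta t_i\le e^{\Delta t_i}$, we get $\prod_{i=m+1}^N(1+\Delta t_i)\le e^{T-t_m}$. With $u^{(0)}=u_0$ this gives \cref{finalstepest}.

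The main point to check is Step 1: the monotonicity of the cubic term must absorb the nonlinearity without any constraint on $\Delta t_m$, and the boundary terms must vanish. The homogeneous conditions on $w$ ensure the latter. The rest is routine.
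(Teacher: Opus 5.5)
Your proposal is correct and follows the paper's proof essentially step for step. It uses the same intermediate function $\bar{u}^{(m)}$ with the triangle inequality. It obtains the same stability bound for $u^{(m)}-\bar{u}^{(m)}$ by testing the difference equation with itself and using the monotonicity of the cubic term as in \cref{3}. It applies \cref{l:error_estimate1} together with $\|\cdot\|\le\sqrt{\Delta t_m}\,\vertiii{\cdot}_{\varepsilon^2,1/\Delta t_m}$, and it unrolls the recursion with $\prod_{j=m+1}^{N}(1+\Delta t_j)\le e^{T-t_m}$, exactly as the paper does.
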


\begin{proof}
Similar to \cref{dr-tm}, $\bar{u}^{(m)}$ is the solution of \cref{pde} with 
\begin{equation*}
    a(x) = \varepsilon^2, \,\, r_1(x) = 1, \,\, r_2(x) = \tfrac{1}{\Delta t_m}, \,\mbox{ and }\,
f(x) = f^{(m)}_n(x) = \bigl(1 + \tfrac{1}{\Delta t_m}\bigr)u^{(m-1)}_n(x).
\end{equation*}
Hence, $\xi^{(m)} = u^{(m)} - \bar{u}^{(m)}$ satisfies 
\begin{equation*}\label{stabeq}
- \varepsilon^2 \bigl(\xi^{(m)}\bigr)_{xx}
+ \Bigl(\bigl(u^{(m)}\bigr)^3 - \bigl(\bar{u}^{(m)}\bigr)^3\Bigr)
+ \frac{1}{\Delta t_m}\xi^{(m)}
= \Bigl(1 + \frac{1}{\Delta t_m}\Bigr)\bigl(u^{(m-1)} - u_n^{(m-1)}\bigr)
\end{equation*}
with the homogeneous boundary conditions. Using its variational form with the test function $v=\xi^{(m)}$, an inequality similar to \cref{3}, and the Cauchy–Schwarz inequality, we have
\begin{align*}
\frac{1}{\Delta t_m}\bigl\|\xi^{(m)}\bigr\|^2 
& \leq \varepsilon^2\bigl|\xi^{(m)}\bigr|_{H^1(I)}^2
+ \Bigl(\bigl(u^{(m)}\bigr)^3 - \bigl(\bar{u}^{(m)}\bigr)^3,\, \xi^{(m)}\Bigr)
+ \frac{1}{\Delta t_m}\bigl\|\xi^{(m)}\bigr\|^2 \\[2mm]
&= \Bigl(1 + \frac{1}{\Delta t_m}\Bigr)
\bigl(u^{(m-1)} - u_n^{(m-1)},\, \xi^{(m)}\bigr) \leq \Bigl(1 + \frac{1}{\Delta t_m}\Bigr) \|u^{(m-1)} - u_n^{(m-1)}\|\, \bigl\|\xi^{(m)}\bigr\|,
\end{align*}
which implies
\begin{equation*}
\bigl\|\xi^{(m)}\bigr\|
\leq  \bigl(1 + \Delta t_m\bigr)\bigl\|u^{(m-1)} - u_n^{(m-1)}\bigr\| .
\end{equation*}
By the triangle inequality
\[
\bigl\|u^{(m)} - u_n^{(m)}\bigr\|
\leq \bigl\|\xi^{(m)}\bigr\| + \bigl\|\bar{u}^{(m)} - u_n^{(m)}\bigr\|, 
\]
the bound in \cref{onestepest} follows from \cref{l:error_estimate1} and 
\[
\bigl\|\bar{u}^{(m)} - u_n^{(m)}\bigr\| \leq \sqrt{\Delta t_m}\;\vertiii{\bar{u}^{(m)} - u_n^{(m)}}_{\varepsilon^2,1/\Delta t_m}
\leq \sqrt{\Delta t_m}\; \inf_{v \in {\cal M}^{\alpha(t_m)}_n(I)} E(\bar{u}^{(m)}-v; \varepsilon^2, 1, 1/\Delta t_m).
\]

Applying \cref{onestepest} successively for $m = 1,\dots,N$ and using
$u^{(0)} = u_0$ yields
\[
\bigl\|u^{(N)} - u_n^{(N)}\bigr\|
\leq \Biggl(\prod_{j=1}^{N}\bigl(1 + \Delta t_j\bigr)\Biggr)
\bigl\|u_0 - u_n^{(0)}\bigr\|
+ \sum_{m=1}^{N} \Biggl(\prod_{j=m+1}^{N}\bigl(1 + \Delta t_j\bigr)\Biggr)
\sqrt{\Delta t_m}\;\varrho_{n,m} ,
\]
where $\prod\limits_{j=N+1}^{N}\!\!\bigl(1 + \Delta t_j\bigr) = 1$ by convention. Now, \cref{finalstepest} is a direct consequence of the inequality $\prod\limits_{j=m+1}^{N}\!\!\bigl(1 + \Delta t_j\bigr)
\leq 
e^{T - t_m}$. This completes the proof of the lemma.
\end{proof}

The error estimate of the convex-splitting shallow Ritz method at the final time is given below.

\begin{theorem}\label{errorbound}
Under the assumptions of \cref{temporalerror}, we have
\begin{equation}\label{fullerrorest}
\bigl\|u(\cdot,T) - u_n^{(N)}\bigr\|
\;\leq\; C_T \Delta t
\;+\; e^{\,T}\bigl\|u_0 - u_n^{(0)}\bigr\|
\;+\; \sum_{m=1}^{N} e^{\,T - t_m}\sqrt{\Delta t_m}\;\varrho_{n,m} .
\end{equation}
\end{theorem}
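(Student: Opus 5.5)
The bound \cref{fullerrorest} will follow from a triangle inequality that splits the total error into a temporal part and a spatial part. Both parts are already controlled by the two preceding lemmas. Specifically, I would write
\[
\bigl\|u(\cdot,T) - u_n^{(N)}\bigr\| \le \bigl\|u(\cdot,T) - u^{(N)}\bigr\| + \bigl\|u^{(N)} - u_n^{(N)}\bigr\|,
\]
where $u^{(N)}$ is the semi-discrete convex-splitting solution defined through \cref{umn}, initialized with $u^{(0)} = u_0$.

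For the first term, the assumptions of \cref{temporalerror} are exactly the hypotheses of the theorem, namely $\Delta t \le 1/4$ and the regularity of $u_t$ and $u_{tt}$. Hence \cref{temporalest} gives $\|u(\cdot,T) - u^{(N)}\| \le C_T\,\Delta t$, with $C_T$ as given there.

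For the second term, I would invoke \cref{finalstepest} of \cref{onesteperror}. It bounds $\|u^{(N)} - u_n^{(N)}\|$ by $e^{T}\|u_0 - u_n^{(0)}\| + \sum_{m=1}^N e^{T-t_m}\sqrt{\Delta t_m}\,\varrho_{n,m}$. Here one must check that \cref{onesteperror} is applied to the same $u^{(m)}$: that lemma is stated for the solutions of \cref{umn}, and its recursion uses $u^{(0)} = u_0$. This is consistent with the setting of \cref{temporalerror}, so no additional hypotheses are needed. The time-stepping error sequence and the network approximation are glued together at the common object $u^{(N)}$.

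Adding the two bounds yields \cref{fullerrorest}. There is no real obstacle here; the only point needing care is confirming that the intermediate semi-discrete solution is the same in both lemmas, including its initial condition and boundary data $\alpha(t_m)$ and $\beta(t_m)$. Everything else is a direct combination of previously established estimates.
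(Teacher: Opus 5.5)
Your proposal is correct and matches the paper's proof: the triangle inequality through the semi-discrete solution $u^{(N)}$ (with $u^{(0)}=u_0$), then \cref{temporalest} for the temporal error and \cref{finalstepest} for the remaining error. Your check that both lemmas refer to the same $u^{(m)}$, with the same initial and boundary data, is the only point needing care, and it holds.
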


\begin{proof} 
\cref{fullerrorest} is a direct consequence of the triangle inequality, \cref{temporalest}, and \cref{finalstepest}. 
\end{proof}

\begin{remark}
    Recall that the set ${\cal M}_n(I)$ is a class of approximating functions equivalent to the free-knot linear splines {\em (}see {\em \cite{Burchard74}} and the discussion in section~1 of {\em \cite{CaiDokFalHer2024a})}. From the definition in \cref{u_bar_m}, we obtain that $\bar{u}^{(m)} \in H^2(I)$. Therefore,
    \begin{equation*}
        \inf_{v \in {\cal M}^{\alpha(t_m)}_n(I)}\vertiii{\bar{u}^{(m)} - v}_{\varepsilon^2,1/\Delta t_m} \leq C(\bar{u}^{(m)})\left(\varepsilon + \frac{1}{\sqrt{\Delta t_m}}\right)\dfrac{1}{n}
    \end{equation*}
     for each $m = 1, \ldots, N$, and hence
    \begin{equation*}
        \varrho_{n,m} \leq \frac{C(\bar{u}^{(m)})}{n}\left(\varepsilon + \frac{1}{\sqrt{\Delta t_m}} + \bigl\|\bar{u}^{(m)}\bigr\|_{L^4(I)}\right).
    \end{equation*}
     Moreover, if $u_0$ has certain smoothness {\em (}see {\em \cite{DeVore1998Nonlinear, Schumaker})}, for example, $u_0 \in H^2(I)$, then for $u_n^{(0)}$ defined as in \cref{u_0LS}, we also have first-order convergence and therefore
    \begin{equation*}
        \bigl\|u(\cdot,T) - u_n^{(N)}\bigr\|
\;\leq\; C_T \Delta t
+ \frac{e^{\,T}C(u_0)}{n}
+ \frac{C_N}{n},
    \end{equation*}
    where $C_N = \sum\limits_{m=1}^{N} e^{\,T-t_m}\,C\left(\bar{u}^{(m)}\right)
\left(1 + \sqrt{\Delta t_m}\,\left(\varepsilon + \bigl\|\bar{u}^{(m)}\bigr\|_{L^4(I)}\right)\right)$.
\end{remark}



\section{Numerical Experiments} \label{Section numerical exp}

This section presents numerical results of the shallow Ritz and convex-splitting shallow Ritz methods for solving the semi-linear diffusion-reaction problem in \cref{pde} and the Allen–Cahn equation described in \cref{sec4}, respectively. The shallow Ritz approximations are obtained by using the rBN method when solving the respective \cref{min_pde} and \cref{minAC}. 

Both problems are non-convex optimization problems, and hence a good initialization is crucial. For the stationary problem in \cref{pde}, as in \cite{Liu_Cai_2022, CaiDokFalHer2024a}, the nonlinear parameters $\bb$, which represent the breakpoints partitioning the interval $I$, are initialized as the uniform partition of $I$. The linear parameters $\bc$ in problem \cref{min_pde} are then obtained by a Newton method until the residual is smaller than $10^{-12}$. This Newton method starts with an initial guess by solving a linearized system of \cref{linear_eq} described in \cref{linearization}, where $\hat{u}_n(x)$ is chosen as the neural network with $\bb$ corresponding to the uniform partition and $\bc = \mathbf{0}$.




For the time-dependent problem in \cref{minAC}, the least-squares NN approximation $u_n^{(0)}(x)$ to the initial data $u_0(x)$ in \cref{alg:BNAC} is computed by the rBN method with the stopping criteria: the residual falls below $10^{-6}$ or a maximum of $500$ iterations is reached. The NN approximation $u_{n}^{(m-1)}(x)$ at the previous time step serves as the initial guess at the current time $t_m$.

\subsection{Singularly Perturbed Diffusion-Reaction Equation}\label{sing_perturb}


The first test problem is the semi-linear diffusion-reaction equation in \cref{pde} with $a(x) = \varepsilon^2$, $r_1(x)=r_2(x)=1$, $I=(-1,1)$, and the exact solution
\[ 
    u(x) = \tanh{\left(\frac{x^2 - 1/4}{\varepsilon}\right)} - \tanh{\left(\frac{3}{4\varepsilon}\right)} + 1.
\] 

\begin{figure}[htb!]
    \centering
    \subfigure[]
    {\includegraphics[width=0.40\textwidth]{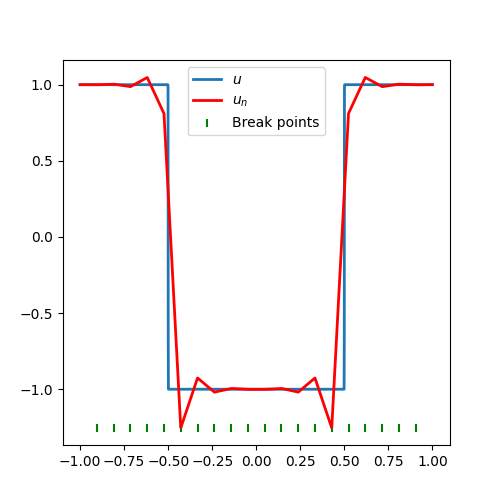}}
    \hspace{2em}
    \subfigure[]{
        \includegraphics[width=0.40\textwidth]{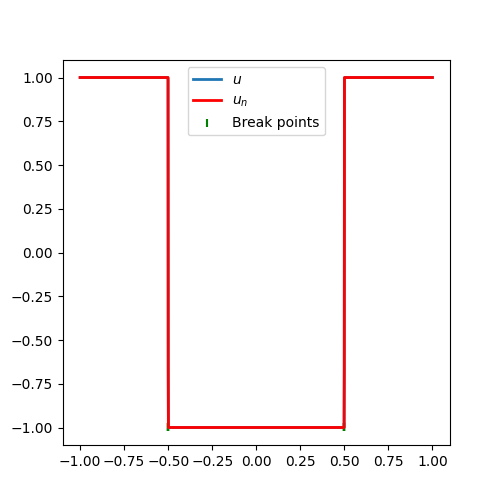}}
        \caption{(a) finite element and (b) shallow Ritz approximations for $a=10^{-7}$.} \label{example2DR}
\end{figure}

For small $a = \varepsilon^2$, the problem has sharp interior layers of width $\varepsilon$. Traditional mesh-based approaches such as the finite element method often result in overshooting and/or oscillations on relatively coarse uniform meshes as shown in \cref{example2DR}(a) for $a = 10^{-7}$ and $20$ uniform mesh points (the mesh size $h=2/21$). A shallow Ritz approximation using $20$ breakpoints is depicted in \cref{example2DR}(b); clearly, there is no overshooting and no oscillation. This approximation is obtained by $300$ rBN iterations with the initial breakpoints uniformly partitioning the interval $I$.

\cref{tab1:errorvsN} reports the relative errors in the $H^1$ seminorm $e_n = |u - u_n|_{H^1(I)}/|u|_{H^1(I)}$ 
for various values of $n$. For a relatively small increase in $n$, \cref{tab1:errorvsN} shows that the shallow Ritz method improves accuracy significantly while the improvement of the finite element approximation is negligible.

\begin{table}[h!]
    \begin{center}
        \begin{tabular}{ |p{2cm}||p{2cm}|p{2cm}| }
        \hline
            $n$&  $e_n$ (uniform)&  $e_n$ (moving) \\
            \hline
            20 & $9.95 \times10^{-1}$ &$1.15 \times10^{-1}$\\
            30 & $9.93 \times10^{-1}$ &$9.18 \times10^{-2}$ \\
            40 & $9.91 \times10^{-1}$ & $7.67 \times10^{-2}$ \\
            50 & $9.89 \times10^{-1}$ & $5.54 \times10^{-2}$\\
            60 &$9.86 \times10^{-1}$&$4.02 \times10^{-2}$\\

            \hline
        \end{tabular}
        \caption{Relative errors $e_n$ for $a = 10^{-7}$ with fixed and moving ($300$ rBN iterations) breakpoints.} 
        \label{tab1:errorvsN}
    \end{center}
\end{table}

For $n=20,\,40$ with fixed and moving breakpoints, \cref{comparison_epsilon} reports the relative errors in the $H^1$ seminorm for various values of $a=\varepsilon^2$. Again, the shallow Ritz approximations are obtained by using $300$ rBN iterations.
As expected, the finite element approximation deteriorates as the value of $a$ decreases. 
For each $a$ and $n$, the shallow Ritz method with rBN substantially improves the initial approximation, and the approximation accuracy is relatively robust with respect to $a$. Similar behavior was observed in \cite{CaiDokFalHer2024b} for the linear diffusion-reaction problem.

\begin{table}[htb!]
    \begin{center}
        \begin{tabular}{ |p{1cm}||p{2cm}|p{2cm}||p{2cm}|p{2cm}|}
        \hline
            & \multicolumn{2}{c||}{$n=20$} & \multicolumn{2}{c|}{$n=40$} \\
            \hline
            $a$ & $e_n$ (uniform) & $e_n$ (moving) & $e_n$ (uniform) & $e_n$ (moving) \\
            \hline
            $10^{-3}$ & $5.75\times10^{-1}$ & $2.31 \times10^{-1}$ & $3.78 \times 10^{-1}$ & $6.77 \times 10^{-2}$ \\ 
            $10^{-4}$ & $8.38\times10^{-1}$ & $1.37 \times10^{-1}$ & $7.00 \times 10^{-1}$ & $6.88 \times 10^{-2}$ \\ 
            $10^{-5}$ & $9.51\times10^{-1}$ & $1.33 \times10^{-1}$ & $9.02 \times 10^{-1}$ & $8.21 \times 10^{-2}$ \\
            $10^{-6}$ & $9.93\times 10^{-1}$ & $1.34 \times10^{-1}$ & $9.70 \times 10^{-1}$ &$6.41 \times 10^{-2}$ \\
            $10^{-7}$ & $9.95\times 10^{-1}$ & $1.15 \times10^{-1}$ & $9.91 \times 10^{-1}$ &$7.67 \times 10^{-2}$ \\
            \hline
        \end{tabular}
        \caption{Relative errors for $n=20,\,40$ and various $a$ with fixed and moving ($300$ rBN iterations) breakpoints.} 
        \label{comparison_epsilon}
    \end{center}
\end{table}

\subsection{Allen–Cahn Equation}

For the Allen–Cahn equation in \cref{sec4}, we present numerical results for two test problems. One has a manufactured solution with an interior layer, and the other has two interior layers with an unknown solution. 
Because the aim here is to evaluate the NN approximation within the spatial domain, a fixed time step of $\Delta t = 10^{-2}$ is used for both experiments. To measure errors, we use the relative discrete $\ell^2$ and $\ell^\infty$ errors at the final time $T$ defined respectively by  
\begin{equation*}
    e^{(2)}_n = \frac{\|u_n^{(N)}-u(\cdot, T)\|_{\ell^2}}{\|u(\cdot, T)\|_{\ell^2}} \quad \mbox{and}\quad
    e^{(\infty)}_n = \frac{\|u_n^{(N)}-u(\cdot, T)\|_{\ell^\infty}}{\|u(\cdot, T)\|_{\ell^\infty}}.
\end{equation*}


\subsubsection{One Interior Layer} 

The first test problem for the Allen–Cahn equation is a traveling wave solution 
\begin{equation*}
    u(x, t) = \frac{1}{2} - \frac{1}{2}\tanh\!\left(\frac{x - 0.5 - s t}{2\sqrt{2}\,\varepsilon}\right),
\end{equation*}
where $s = 3\varepsilon/\sqrt{2}$ denotes the wave speed and $\varepsilon = 10^{-2.5}$. \cref{tab2:errorvsN} reports the relative $H^1$ seminorm errors at time $T=1$ for various values of $n$ with fixed uniform and moving breakpoints. This test demonstrates that moving the mesh yields significantly more accurate approximations with fewer degrees of freedom than fixing the mesh.

\begin{table}[h!]
    \begin{center}
        \begin{tabular}{ |p{2cm}||p{2cm}|p{2cm}| }
        \hline
            $n$&  $e_n$ (uniform)&  $e_n$ (moving) \\
            \hline
            15 & $8.97 \times10^{-1}$ & $2.13 \times 10^{-1}$\\
            30 & $6.13 \times10^{-1}$ & $1.04 \times 10^{-1}$ \\
            45 & $5.22 \times10^{-1}$ & $8.54 \times 10^{-2}$ \\
            60 & $5.24 \times10^{-1}$ & $5.37 \times 10^{-2}$\\
            75 & $3.06 \times10^{-1}$ & $4.18 \times 10^{-2}$\\

            \hline
        \end{tabular}
        \caption{Relative errors $e_n$ for $\varepsilon = 10^{-2.5}$ with fixed uniform and moving breakpoints.}
        \label{tab2:errorvsN}
    \end{center}
\end{table}

\begin{figure}[htb!]
    \centering
    \subfigure[]{
        \includegraphics[width=0.40\textwidth]{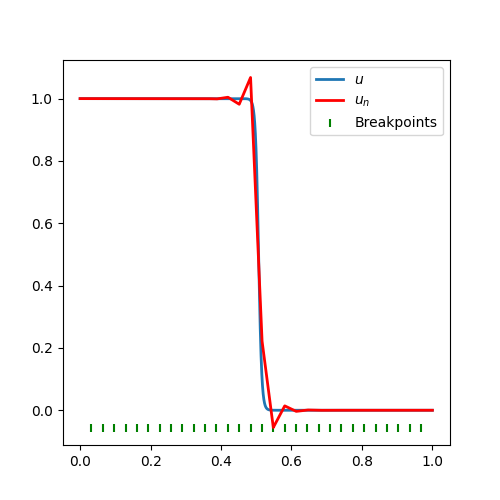}}
    \hspace{2em}
    \subfigure[]{
        \includegraphics[width=0.40\textwidth]{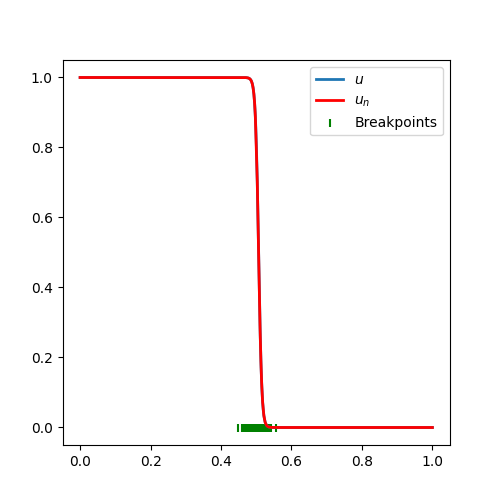}}
    \caption{For $\varepsilon= 10^{-2.5}$ and $T = 1$: (a) optimized NN model with 30 fixed uniform breakpoints, $e_n = 6.1\times 10^{-1}$; (b) optimized NN model with 30 moving breakpoints, $e_n =1.0\times 10^{-1}$.}
    \label{ex1AC_uniform}
\end{figure}

\begin{figure}[htb!]
    \centering
    \begin{minipage}[b]{0.32\textwidth}
        \centering
        \includegraphics[width=\textwidth]{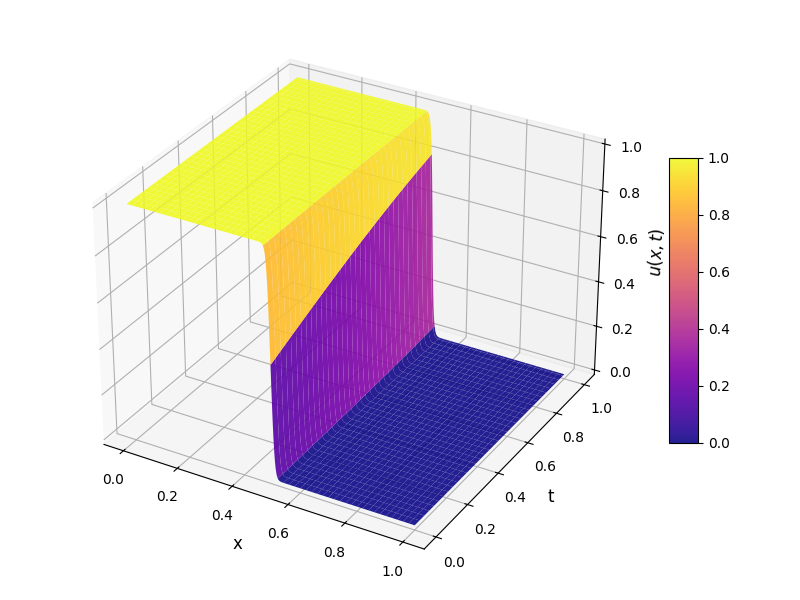}
        
        \small (a) Exact solution
    \end{minipage}
    \hfill
    \begin{minipage}[b]{0.32\textwidth}
        \centering
        \includegraphics[width=\textwidth]{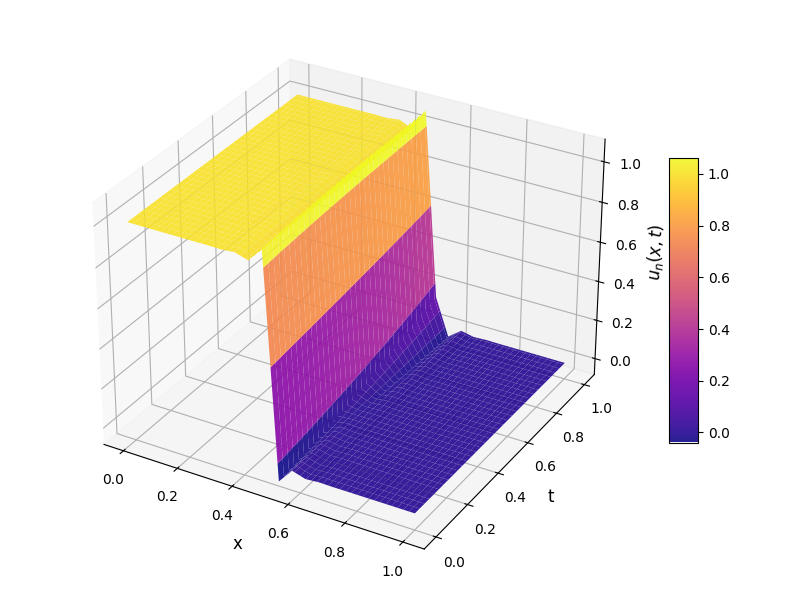}
        
        \small (b) Uniform mesh
    \end{minipage}
    \hfill
    \begin{minipage}[b]{0.32\textwidth}
        \centering
        \includegraphics[width=\textwidth]{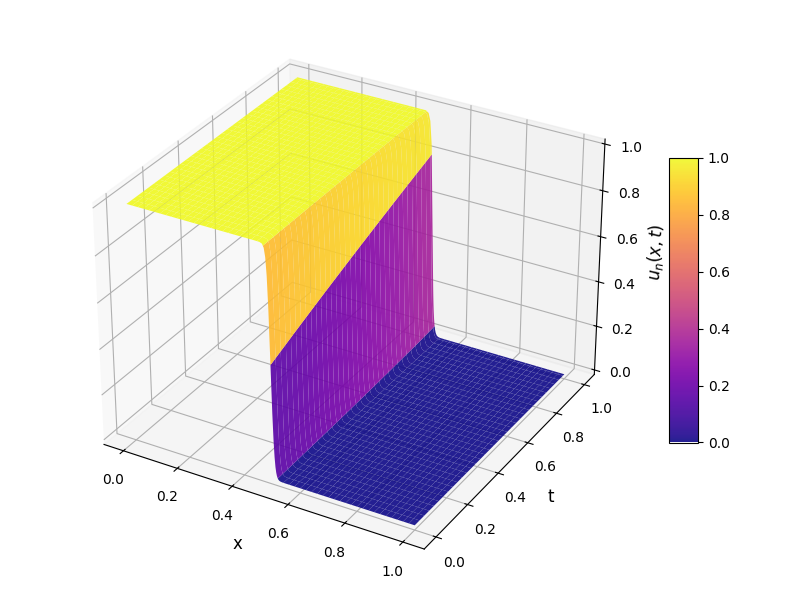}
        
        \small (c) Moving mesh
    \end{minipage}
    
    \vspace{1em}
    \caption{Neural network approximations on fixed and moving meshes for $\varepsilon = 10^{-2.5}$ and $n = 30$.}
    \label{ex_sp_AC}
\end{figure}

\cref{ex1AC_uniform}(a) depicts the convex-splitting linear finite element approximation with $n = 30$ uniform mesh points at time $T=1$, exhibiting the usual overshooting and oscillations. A significantly improved approximation by the convex-splitting shallow Ritz method is illustrated in \cref{ex1AC_uniform}(b), where $50$ rBN iterations are performed at each time step. \cref{ex_sp_AC} plots the exact solution and numerical approximations in space and time.


 

\subsubsection{Two Interior Layers}\label{secondTestAC}
The second problem for the Allen–Cahn equation has periodic boundary conditions and the following initial condition 
\begin{equation*}
    u(x, 0) =\tanh\left(\frac{x - \frac{1}{4}}{\sqrt{2}\varepsilon} \right) - \tanh\left(\frac{x - \frac{3}{4}}{\sqrt{2} \varepsilon}\right) -1 + 0.1\sin(2\pi x).
\end{equation*}
Since the solution at $x = 0$ and $x = 1$ for $t \in [0, 1.5]$ remains close to $-1$, the neural network approximation was obtained by replacing the periodic boundary condition with the Dirichlet condition $u(0, t) = u(1, t) = -1$. For comparison, a reference solution is computed by a Fourier spectral method on $10^4$ spatial points and a second-order BDF scheme \cite{shen2010numerical} with $10^{6}$ time steps for the respective spatial and temporal discretizations.


For $n = 25$, the least-squares NN approximations to $u(x,0)$ for $\varepsilon = 10^{-3}$ using uniform and moving meshes are shown in \cref{example2ACInit}, and the numerical solutions at $T = 1.5$ for $\varepsilon = 10^{-3.5}$ in \cref{example2ACmoving}. The space-time graphs of the reference and numerical solutions are depicted in \cref{ex2_sp_AC}. We observe that the moving mesh avoids the oscillations present in the uniform mesh approximation and yields a smaller relative error $e_n^{(\infty)}$.

\begin{figure}[htb!]
    \centering
    \subfigure[]
    {\includegraphics[width=0.40\textwidth]{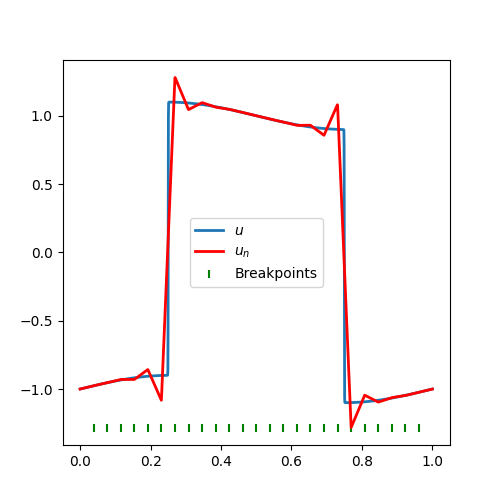}}
    \hspace{2em}
    \subfigure[]{
        \includegraphics[width=0.40\textwidth]{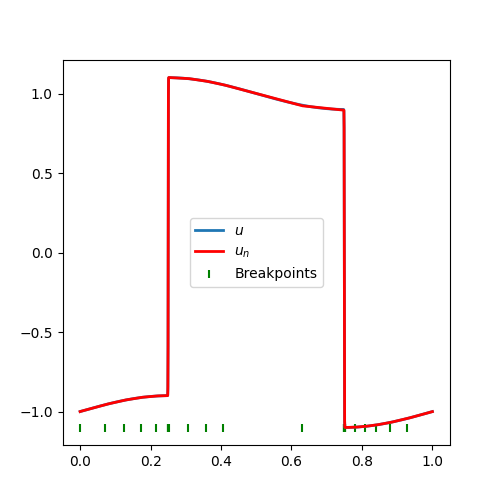}}
    \caption{For $\varepsilon = 10^{-3}$ and $25$ breakpoints, NN approximations at the initial time $t = 0$: (a) fixed, relative $L^2$ error $1.5\times 10^{-1}$; (b) moving, relative $L^2$ error $1.7\times 10^{-3}$.}
    \label{example2ACInit}
\end{figure}

\begin{figure}[htb!]
    \centering
    \subfigure[]
    {\includegraphics[width=0.40\textwidth]{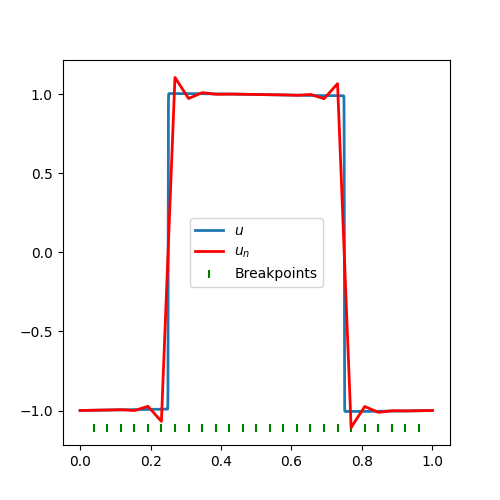}}
    \hspace{2em}
    \subfigure[]{
        \includegraphics[width=0.40\textwidth]{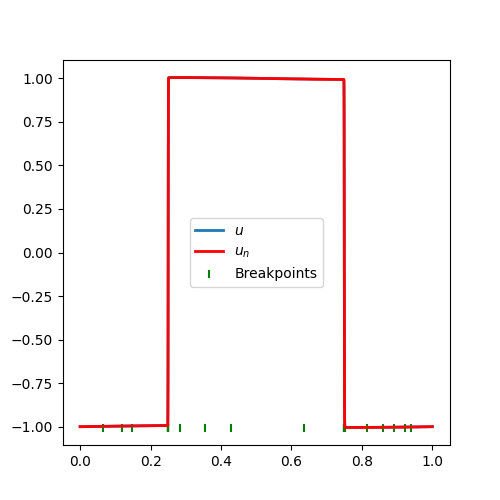}}
    \caption{For $\varepsilon  = 10^{-3.5}$ and $25$ breakpoints,  NN approximations at the final time $T = 1.5$: (a) fixed,  $e_n^{(\infty)} =9.3 \times 10^{-1}$; (b) moving, $e_n^{(\infty)} =3.47\times10^{-2}$.}
    \label{example2ACmoving}
\end{figure}

\begin{figure}[htb!]
    \centering
    \begin{minipage}[b]{0.32\textwidth}
        \centering
        \includegraphics[width=\textwidth]{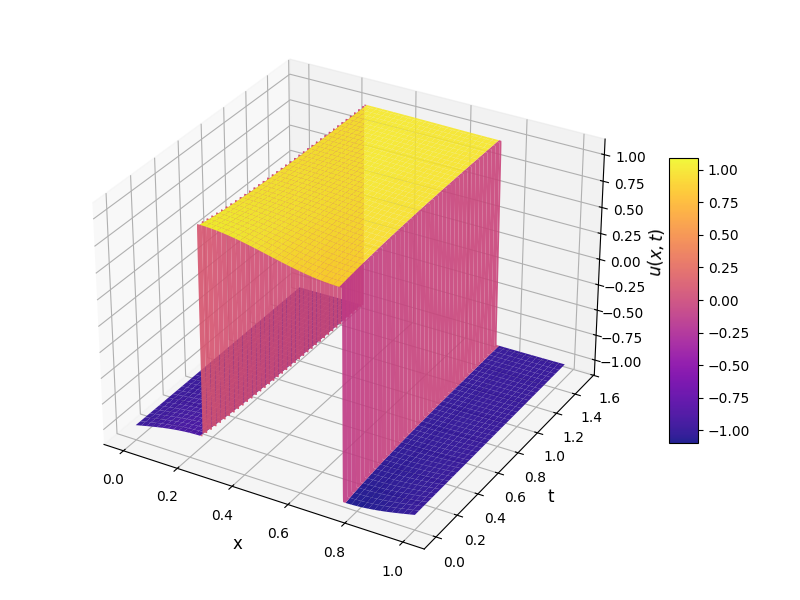}

\small (a) Reference solution
    \end{minipage}
    \hfill
    \begin{minipage}[b]{0.32\textwidth}
        \centering
        \includegraphics[width=\textwidth]{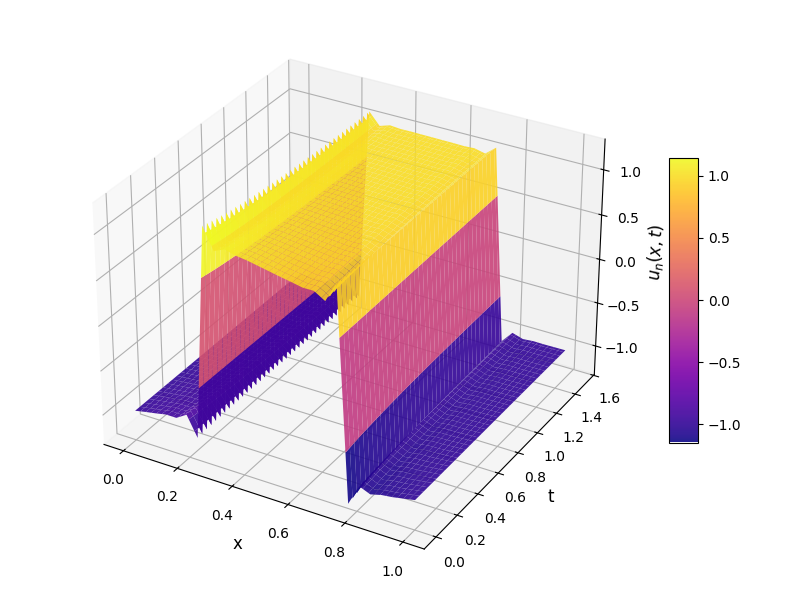}
        
        \small (b) Uniform mesh
    \end{minipage}
    \hfill
    \begin{minipage}[b]{0.32\textwidth}
        \centering
        \includegraphics[width=\textwidth]{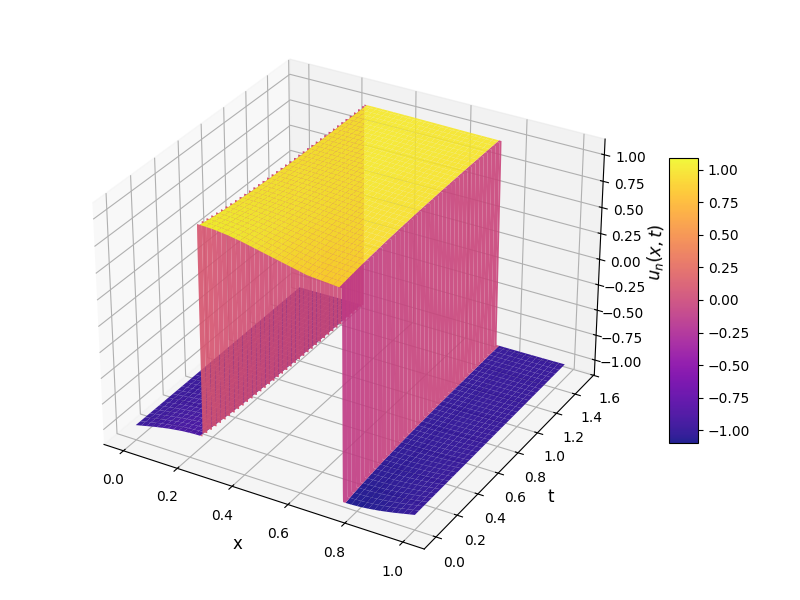}
        
        \small (c) Moving mesh
    \end{minipage}
    
    \vspace{1em}
    \caption{Neural network approximations on fixed and moving meshes for $\varepsilon = 10^{-3.5}$ and $n = 25$.}
    \label{ex2_sp_AC}
\end{figure}

Let $u^I(x)$ denote the piecewise linear interpolant of the reference solution $u(\cdot, T)$ at the final time $T$. The relative $H^1$ seminorm error with respect to this interpolant is defined by
\begin{equation*}
    e^{I}_n = \frac{|u_n^{(N)} - u^I|_{H^1(I)}}{|u^I|_{H^1(I)}}.
\end{equation*}  
At the final time $T=1.5$, \Cref{comparison_epsilon2} reports the NN approximation errors for different values of $\varepsilon$. The results show clear improvements with the moving mesh, while the error levels remain fairly consistent across the tested values of $\varepsilon$.

\begin{table}[htb!]
    \centering
    \begin{tabular}{ |p{1cm}||p{1.8cm}|p{1.8cm}|p{1.8cm}|p{1.8cm}|p{1.8cm}|p{1.8cm}| }
    \hline
    $\varepsilon$
& \multicolumn{1}{|p{1.8cm}|}{{\centering Uniform\arraybackslash}} 
      & \multicolumn{1}{p{1.8cm}|}{{\centering Moving\arraybackslash}} 
      & \multicolumn{1}{p{1.8cm}|}{{\centering Uniform\arraybackslash}} 
      & \multicolumn{1}{p{1.8cm}|}{{\centering Moving\arraybackslash}} 
      & \multicolumn{1}{p{1.8cm}|}{{\centering Uniform\arraybackslash}} 
      & \multicolumn{1}{p{1.8cm}|}{{\centering Moving\arraybackslash}} \\
    \hline
      & $e_n^{(2)}$ & $e_n^{(2)}$ & $e_n^{(\infty)}$ & $e_n^{(\infty)}$ & $e_n^{I}$ & $e_n^{I}$ \\
    \hline
    $10^{-1.5}$ & $1.7\times10^{-2}$ & $3.5 \times10^{-3}$ & $5.8 \times 10^{-2}$ & $1.3 \times 10^{-2}$ & $2.1 \times 10^{-1}$ & $9.0 \times 10^{-2}$ \\ 
    $10^{-2}$ & $3.2\times10^{-2}$ & $3.6 \times10^{-3}$ & $1.9 \times 10^{-1}$ & $2.7 \times 10^{-2}$ & $4.0 \times 10^{-1}$ & $1.2 \times 10^{-1}$ \\ 
    $10^{-2.5}$ & $1.0\times10^{-1}$ & $3.1 \times10^{-3}$ & $5.5 \times 10^{-1}$ & $4.6 \times 10^{-2}$ & $1.3$ & $1.5 \times 10^{-1}$ \\
    $10^{-3}$ & $1.4\times10^{-1}$ & $2.6 \times10^{-3}$ & $8.0 \times 10^{-1}$ & $5.5 \times 10^{-2}$ & $2.6$ & $1.5 \times 10^{-1}$ \\
    $10^{-3.5}$ & $1.5\times10^{-1}$ & $1.4 \times10^{-3}$ & $9.3 \times 10^{-1}$ & $3.4 \times 10^{-2}$ & $4.8$ & $1.7 \times 10^{-1}$ \\
    \hline
    \end{tabular}
    \caption{Relative errors for $n = 25$ and $\Delta t= 10^{-2}$ with uniform and moving breakpoints.}
    \label{comparison_epsilon2}
\end{table}


\section{Discussion and Conclusion}

The efficient shallow Ritz method developed in \cite{CaiDokFalHer2024a, CaiDokFalHer2024b} for linear diffusion-reaction problems was extended to a nonlinear problem with a cubic reaction term. A basic error estimate of the shallow Ritz method for this nonlinear problem was established in \cref{l:error_estimate1}. The lemma is similar to C\'{e}a's lemma and holds for multi-dimensional problems (see \cref{r1}).

A reduced block Newton (rBN) method was developed for the resulting finite-dimensional non-convex optimization problem. At each iteration, a nonlinear block Gauss–Seidel method was adopted based on blocks of the linear and nonlinear parameters, and each block was solved by Newton's method. A major contribution of rBN is to remove possible singularities of the corresponding Hessian matrix for the nonlinear parameter block when neurons do not contribute to the approximation or arrive at nearly optimal positions. Although the corresponding Hessian matrices can be inverted in $\mathcal{O}(n)$ operations in one dimension, an efficient solver is not yet available in multiple dimensions.


Combining the shallow Ritz method with a convex splitting scheme, we developed the convex-splitting shallow Ritz method for the Allen–Cahn equation. The method inherits the unconditional energy stability of the time discretization (see \cref{NNenergystabcor}). An error estimate at the final time $T$ in the $L^2$ norm was established in \cref{errorbound}. Numerical experiments confirm that the proposed approach produces stable and accurate solutions while significantly reducing errors compared to uniform mesh discretizations.


\section*{Acknowledgments}

The authors gratefully acknowledge Dr. Jie Shen for providing the code used to compute the reference solution for the test problem in \cref{secondTestAC}. The introduction was rewritten by ChatGPT based on a draft by the authors.

\smallskip


\appendix

\bibliographystyle{plain}
\bibliography{ref}

\clearpage

\end{document}